\documentclass[11pt]{amsart}

\usepackage{fullpage}
\usepackage{hyperref}
\usepackage{placeins}

\usepackage{amssymb, amsmath, amsthm, amsfonts, mathtools}
\usepackage{mathabx}

\usepackage{fancyhdr}

\allowdisplaybreaks

\usepackage{hyperref}
\usepackage[nameinlink]{cleveref}

\usepackage{autonum} 

\usepackage[dvipsnames]{xcolor}

\usepackage{graphicx}

\usepackage{comment}

\usepackage{thmtools}

\usepackage{pdfpages}

\usepackage{leftindex}

\theoremstyle{plain}
\newtheorem{theorem}{Theorem}[section]
\crefname{theorem}{Theorem}{Theorems}
\newtheorem{proposition}[theorem]{Proposition}
\crefname{proposition}{Proposition}{Propositions}
\newtheorem{lemma}[theorem]{Lemma}
\crefname{lemma}{Lemma}{Lemmas}
\newtheorem{corollary}[theorem]{Corollary}
\crefname{corollary}{Corollary}{Corollaries}
\theoremstyle{definition}
\newtheorem{definition}[theorem]{Definition}
\crefname{definition}{Definition}{Definitions}
\newtheorem{example}[theorem]{Example}
\crefname{example}{Example}{Examples}
\newtheorem{def-prop}[theorem]{Definition-Proposition}
\crefname{def-prop}{Definition-Proposition}{}
\newtheorem{def-lem}[theorem]{Definition-Lemma}
\crefname{def-lem}{Definition-Lemma}{}

\crefname{assumption}{Assumption}{Assumptions}
\theoremstyle{remark}
\newtheorem{remark}[theorem]{Remark}
\crefname{remark}{Remark}{Remarks}

\crefname{claim}{Claim}{Claims}

\crefname{notation}{Notation}{}

\crefname{question}{Question}{Questions}

\crefname{todo}{To do}{To dos}

\crefname{problem}{Problem}{Problems}

\crefname{chapter}{Chapter}{Chapters}
\crefname{section}{Section}{Sections}
\crefname{subsection}{Subsection}{Subsections}
\crefname{figure}{Figure}{Figures}

\usepackage[figurename=Figure,tablename=Table]{caption}

\newcommand\supp{\mathop{\rm supp}}

\DeclareMathOperator{\spn}{span}

\newcommand{\cl}[1]{\mathcal{#1}}
\newcommand{\bb}[1]{\mathbb{#1}}

\newcommand{\ca}{\mathrm{C}^*}
\newcommand{\ol}{\overline}

\usepackage{graphicx} % Required for inserting images

\title[Groupoid operator systems]{Operator systems in \'etale groupoid C$^*$-algebras}

\author[Baziotis et al]{}
\author[]{Georgios Baziotis}
\address{Research Institute of Mathematics and NextQuantum Innovation Research Center, Department of Physics and Astronomy\\ Seoul National University, Seoul 08826, South Korea}
\email{baziotis@snu.ac.kr}

\author[]{Douglas Farenick}
\address{Department of Mathematics \& Statistics, University of Regina, Regina, Saskatchewan S4S 0A2, Canada}
\email{douglas.farenick@uregina.ca}

\author[]{Evgenios T.A. Kakariadis}
\address{Department of Mathematics, National and Kapodistrian University of Athens, Athens, 1578 84, Greece}
\email{evkakariadis@math.uoa.gr}

\author[]{Malte Leimbach}
\address{Christian-Albrechts-Universit\"at zu Kiel, Mathematisches Seminar, Heinrich-Hecht-Platz 6, 24118 Kiel, Germany.}
\email{leimbach@math.uni-kiel.de}

\author[]{Lyudmila Turowska}
\address{Department of Mathematical Sciences, Chalmers University of Technology and University of Gothenburg, Chalmers Tv\"argata 3, SE-412 96, Gothenburg, Sweden}
\email{turowska@chalmers.se}

\author[]{Walter D. van Suijlekom}
\address{Institute for Mathematics, Astrophysics and Particle Physics, Radboud	University Nijmegen, Heyendaalseweg 135, 65254 AJ Nijmegen, The Netherlands.}
\email{waltervs@math.ru.nl}

\date{October 25, 2026}

\thanks{2020 {\it Mathematics Subject Classification:} 46L05, 46L07, 46L08, 47L55}

\thanks{{\it Key words and phrases:} Operator systems, Cantor systems, C*-envelope, hyperrigidity.}

\thanks{\textit{AI usage (AIL2026):} \texttt{AI-0: Text1 Check1 [QWEN3.8, 2026-10]}}

\begin{document}

%%%%%%%%%%%%%%%%%%%%%%%%%%%
\begin{abstract}
We study operator systems arising from positivity domains of \'etale groupoids with compact unit space. 
Extending Kavruk's \emph{enough unitaries} principle we show that such operator systems are hyperrigid in the C$^*$-algebra they generate. 
We focus on operator systems of positivity domains in UHF algebras and, by identifying them with Cantor graph operator systems, we show that they are hyperrigid.
Furthermore, we show that in general, such a characterization is not possible when the \'{e}taleness condition of the underlying groupoid is removed, due to the failure of reduced spectral and compact operator synthesis.
\end{abstract}
%%%%%%%%%%%%%%%%%%%%%%%%%%%

\maketitle

%%%%%%%%%%%%%%%%%%%%%%%%%%%
\section{Introduction}
%%%%%%%%%%%%%%%%%%%%%%%%%%%

In a recent approach to noncommutative geometry, operator systems have been put forward as a way to refine existing C$^*$-algebraic machinery;
they appear both in the setting of spectral truncations \cite{CvS21} and in the setting of limited spatial resolution \cite{CvS22}.
The principle in the latter is to only be able to distinguish points in a metric space $(X,d)$ up to a tolerance of, say $\varepsilon$.
This can be formalized by considering the \emph{tolerance relation} 
\begin{align} 
    \mathcal{R}_\varepsilon := \{(x,y) \mid d(x,y) < \varepsilon\}
\end{align}
as a symmetric subset containing the diagonal of the pair groupoid $X\times X\rightrightarrows X$.
It seems then natural to study the self-adjoint subspace 
\begin{align}
    \mathcal{E}(\mathcal{R}_\varepsilon) := \mathrm{span}\{f \in \mathrm{C}_\mathrm{c}(X \times X) \mid \supp(f) \subseteq \mathcal{R}_\varepsilon\} 
\end{align}
of the groupoid C$^*$-algebra $\mathrm{C}^*(X \times X) \cong \mathcal{K}(\mathrm{L}^2(X))$. 

Following \cite{CvS22} we go one step further in this paper and consider open symmetric subsets of topological groupoids containing the unit space;
in \emph{loc.cit.}\ such subsets were called `bonds', but we prefer to call them \emph{positivity domains}, in line with \cite{KLTvS26}. 
Explicitly we are interested in the C$^*$-envelopes of operator subsystems of the (universal) groupoid C$^*$-algebras generated by positivity domains, as well as in the question when these operator systems are hyperrigid in the sense of Arveson \cite{Arv11Bdry2}.
Apart from the pair groupoid case which was investigated in \cite{CvS22}, 
we restrict our attention to \'etale groupoids with compact unit space.

If $\Gamma$ is a discrete group, a positivity domain $\Omega \subseteq \Gamma$ is a subset such that $\Omega^{-1} = \Omega \ni e$.
If $\Omega$ contains a generating set for $\Gamma$, then the operator system 
\begin{align} 
    \mathcal{E}(\Omega) := \mathrm{span}\{u_\gamma \mid \gamma \in \Omega\}
\end{align}
contains the unitary generators for $\mathrm{C}^*(\Gamma)$, and it follows from \cite[Proposition 5.6]{Kav14} that the C$^*$-envelope of $\mathcal{E}(\Omega)$ coincides with the group C$^*$-algebra $\mathrm{C}^*(\Gamma)$.
We point out that the C$^*$-envelope is used as an invariant in the approach to the extension problem for partially defined positive semi-definite functions in \cite{KLTvS26}, which in part motivates our interest here. Moreover, 
we mention the \emph{Cantor graph isomorphism game} recently introduced in \cite{BCTT25} which, from the point of view of this article can be regarded as a \emph{positivity domain isomorphism game}.

\par In what follows we outline the structure of the paper. 
In \autoref{sec:env_hyperr} we provide a short introduction to operator systems and their C$^*$-envelopes. 
After briefly  discussing the notion of hyperrigidity, we vary on Kavruk's \emph{enough unitaries} principle, and utilize it in the sequel to generalize the discrete group case to \'etale groupoids with compact unit space in \autoref{groupoid_op_sys}. 
We conclude Section 3 with \autoref{lem:bimod}, showing that self-adjoint operator spaces arising from positivity domains are bimodules over the abelian self-adjoint algebra $\mathrm{C}_\mathrm{c}(\mathcal{G}^{(0)})$ of compactly supported continuous functions over the unit space. 
\cref{sec:cantor,sec:sp-synthesis} are partially concerned with providing examples and counterexamples for the converse of \autoref{lem:bimod}. 
In \autoref{sec:cantor}, we focus on operator subsystems of finite dimensional and UHF C*-algebras arising from positivity domains and characterize them as graph and Cantor graph operator systems, respectively, thus providing a converse to \autoref{lem:bimod}. 
We rely on results by Power \cite{Pow93} which hold in the more general setting of (unital) AF C$^*$-algebras, and in fact our results generalize to that setting as well.
Moreover, utilizing our results from \autoref{sec:env_hyperr}, we show that these operator systems are hyperrigid. 
The bimodularity theme reappears in \autoref{sec:sp-synthesis}, where the necessity of \'etaleness of the underlying groupoid is demonstrated. 
In particular, we provide two cases; \autoref{spectral_synth} in the group setting and \autoref{compact_synth} in the transformation groupoid case, where the converse of \autoref{lem:bimod} fails. 
In both instances, this is achieved through connections with abstract harmonic analysis and specifically due to the failure of (reduced) \emph{spectral} and \emph{compact operator synthesis}.

%%%%%%%%%%%%%%%%%%%%%%%%%%%
\section*{Acknowledgments}
The authors would like to thank Lara Ismert and Ivan G. Todorov, for helpful discussions. 
This note is the result of a research session during the BIRS Workshop \lq\lq Operator systems and their applications'' in February 2025, and its continuation during the program on \lq\lq Operator Algebras and Quantum Information'' at Institut Mittag-Leffler in Spring 2026.
The authors are grateful to BIRS for hospitality during the BIRS Workshop, and to the Swedish Research Council for their support under grant no.\ 2021-06594 while the authors were in residence at Institut Mittag-Leffler in Djursholm, Sweden.

GB was supported by the National Research Foundation of Korea (NRF) grant funded by the Korea government (MSIT) (No. RS-2025-00561391, No. RS-2024-00413957 and RS-2022-NR069971) and by NSF grants 2115071 and 2154459. 
DF was supported by the NSERC Discovery Grant RGPIN-2025-05625. 
EK acknowledges the support from the European Union - Next Generation EU (Implementation Body: HFRI. Project name: Noncommutative Analysis: Operator Systems and Nonlocality. HFRI Project Number: 015825).
ML was supported by NWO under grant OCENW.KLEIN.376, and by the Max Planck Institute for Mathematics. 
LT was supported by the Swedish Research Council project grant 2023-04555.

%%%%%%%%%%%%%%%%%%%%%%%%%%%%%%%%
\subsection*{Open access statement}
%%%%%%%%%%%%%%%%%%%%%%%%%%%%%%%%

For open access purposes, the authors have applied a Creative Commons Attribution (CC BY) license to any Author Accepted Manuscript (AAM) version arising.

%%%%%%%%%%%%%%%%%%%%%%%%%%%%%%%%
\subsection*{Conflict of interest}
%%%%%%%%%%%%%%%%%%%%%%%%%%%%%%%%

The authors declare that they have no conflict of interest.

%%%%%%%%%%%%%%%%%%%%%%%%%%%%%%%%
\subsection*{AI usage}
%%%%%%%%%%%%%%%%%%%%%%%%%%%%%%%%

Following the AIL2026 classification~\cite{Prinz2026AIL}, this paper is tagged \texttt{AI-0: Text1 Check1 [QWEN3.8, 2026-10]}. 
The research, including all ideas, results and proofs, was carried out by the authors without AI assistance (\texttt{AI-0}). 
After completion, the manuscript was passed through QWEN for a language check and a sanity check of the mathematics; only a few minor editorial suggestions were adopted (\texttt{Text1}), and the check found no errors that required changes to the mathematics (\texttt{Check1}). 
The authors take full responsibility for the content.

%%%%%%%%%%%%%%%%%%%%%%%%%%%
\section{C$^*$-envelope and hyperrigidity}\label{sec:env_hyperr}
%%%%%%%%%%%%%%%%%%%%%%%%%%%

Abstractly, an operator system is a matrix ordered complex $^*$-vector space $\mathcal{S}$ with a distinguished Archimedean order unit $e_{\mathcal{S}}$. 
The matrix ordering and Archimedean order unit endow $\mathcal{S}$ with a matrix norm structure; however, the definition of operator system typically does not make additional assumptions about whether $\mathcal{S}$ is complete with respect to this norm. 
The embedding theorem of Choi and Effros \cite{choi--effros1977} asserts that every operator system $\mathcal{S}$ admits a unital complete order embedding $\phi:\mathcal{S}\rightarrow B(H)$ (i.e.\ a unital complete order isomorphism onto its image), for some Hilbert space $H$, such that $\phi(e_{\mathcal{S}})$ is the identity operator on $H$. 
Therefore, operator systems may be viewed more concretely as self-adjoint linear subspaces (possibly not norm-closed) of unital C$^*$-algebras that contain the multiplicative identity of the C$^*$-algebra.
In principle, there are many choices for the embedding $\phi$, and distinct embeddings $\phi_1$ and $\phi_2$ may result in non-isomorphic C$^*$-algebras $\mathrm{C}^*(\phi_1(\mathcal{S}))$ and $\mathrm{C}^*(\phi_2(\mathcal{S}))$ generated by the copies $\phi_1(\mathcal{S})$ and $\phi_2(\mathcal{S})$ of $\mathcal{S}$.

Although (unitally completely order) isomorphic copies of an operator system $\mathcal{S}$ may generate C$^*$-algebras with strikingly different properties, there
is a minimal C$^*$-algebra generated by $\mathcal{S}$, called the \emph{C$^*$-envelope of $\mathcal{S}$}, that is co-universal for all C$^*$-algebras generated by isomorphic copies of $\mathcal{S}$:
the minimal C$^*$-algebra is a quotient of any other generated C$^*$-algebra. 
More precisely, given an operator system $\mathcal{S}$, its C$^*$-envelope $\mathrm{C}^*_\mathrm{e}(\mathcal{S})$ is the C$^*$-algebra generated by the image of $\mathcal{S}$ under a unital complete order embedding $\iota : \mathcal{S} \rightarrow \mathcal{B}(H)$, such that the following universal property holds:
for every unital complete order embedding $\phi : \mathcal{S} \rightarrow \mathcal{B}(K)$, there is a $^*$-homomorphism $\pi : \mathrm{C}^*(\phi(\mathcal{S})) \rightarrow \mathrm{C}^*_\mathrm{e}(\mathcal{S})$, such that $\pi \circ \phi = \iota$.
The C$^*$-envelope $\mathrm{C}^*_\mathrm{e}(\mathcal{S})$ is constructed for instance as the C$^*$-algebra generated by the image of $\mathcal{S}$ inside its \emph{injective envelope} \cite{Ham79}, or as the image of a \emph{maximal} unital complete order embedding \cite{DM05}, or as the direct sum of the images of all \emph{boundary representations} \cite{DK15}.
See \cite{CvS22} for the C$^*$-envelope of a (possibly non-unital) self-adjoint space of operators, and \cite{CKKP26} for characterizations of the correct notion of embedding in this category of self-adjoint operator spaces.

Following \cite{Arv10Bdry3}, an operator subsystem $\mathcal{S}$ of a unital C$^*$-algebra $\mathcal{A}$ is said to be \emph{reduced} if $\mathrm{C}^*_\mathrm{e}(\mathcal{S})=\mathrm{C}^*(\mathcal{S})$. 
A natural problem is to determine, whether a given operator system $\mathcal{S}$ is reduced.
This is a recurring theme in the present paper, related to the notion of hyperrigidity.

The \emph{hyperrigidity} property was introduced by Arveson \cite{Arv11Bdry2} for a generating set $\mathcal{R}$ in a C$^*$-algebra $\mathrm{C}^*(\mathcal{R})$. 
Such a set $\mathcal{R}$ will be called \emph{hyperrigid} (in $\mathrm{C}^*(\mathcal{R})$) if for every faithful non-degenerate $^*$-representation $\Phi \colon \mathrm{C}^*(\mathcal{R}) \to \mathcal{B}(H)$, and every sequence of unital completely positive (ucp) maps $\psi_n \colon \mathcal{B}(H) \to \mathcal{B}(H)$, the following holds:
\[
\lim_n \psi_n(\Phi(f)) = \Phi(f), \forall f \in \mathcal{R} \;
\Longrightarrow \;
\lim_n \psi_n(\Phi(a)) = \Phi(a), \forall a \in \mathrm{C}^*(\mathcal{R}),
\]
where the limits are taken in the norm topology.
As proved by Arveson \cite[Theorem 2.1]{Arv11Bdry2}, hyperrigidity for operator systems is connected with the unique extension property for 
%restrictions of unital $^*$-homomorphisms: 
ucp maps: 
namely, a separable operator system $\mathcal{S}$ is \emph{hyperrigid} in $\mathrm{C}^*(\mathcal{S})$ if and only if, for every unital $^*$-representation $\pi:\mathrm{C}^*(\mathcal{S})\rightarrow B(H)$, the induced completely positive linear map $\pi_{\vert\mathcal{S}}:\mathcal{S}\rightarrow B(H)$ has a unique completely positive extension to $\mathrm{C}^*(\mathcal{S})$. 

A \emph{boundary representation} for an operator system $\mathcal{S}$ generating a unital C$^*$-algebra $\mathcal{A}$ is an irreducible unital $^*$-representation $\rho:\mathcal{A}\rightarrow B(H)$ for which $\rho_{\vert\mathcal{S}}:\mathcal{S}\rightarrow B(H)$
has a unique ucp extension to $\mathcal{A}$. 
Therefore, if $\mathcal{S}$ is hyperrigid in $\mathcal{A}$, every irreducible representation $\rho$ of $\mathcal{A}$ is a boundary representation for $\mathcal{S}$, which implies $\mathcal{A}$ is the C$^*$-envelope of $\mathcal{S}$ \cite{Arv11Bdry2,DK15}.

Arveson conjectured that a converse holds \cite[Conjecture 4.3]{Arv11Bdry2}, i.e. that an operator subsystem $\mathcal{S}$ of unital separable C$^*$-algebra $\mathcal{A}$ generated by $\mathcal{S}$ will be hyperrigid in $\mathcal{A}$, if every irreducible representation of $\mathcal{A}$ is a boundary representation for $\mathcal{S}$. 
This was disproven recently \cite{bilich--dor-on2024,bilich--scherer--2026}, hence decoupling hyperrigidity from irreducible representations. 
Among the main contributions of this paper is the identification of a number of classes of operator systems that are hyperrigid in the C$^*$-algebras they generate.

The multiplicative domain of a ucp map will be a useful tool in studying the restrictions of unital $^*$-homomorphisms of unital C$^*$-algebras
to operator subsystems.  
Recall that the \emph{multiplicative domain} of a ucp map $\phi:\mathcal{A}\rightarrow\mathcal{B}$ of unital C$^*$-algebras is the set $\mathcal M_\phi$ consisting of all $a\in\mathcal{A}$ for which $\phi(a^*a)=\phi(a)^*\phi(a)$ and $\phi(aa^*)=\phi(a)\phi(a)^*$. 
Choi showed that $\mathcal M_\phi$ is a unital C$^*$-subalgebra of $\mathcal{A}$ and the restriction of $\phi$ to $\mathcal M_\phi$ is a $^*$-homomorphism 
\cite[Proposition 1.5.7]{BO08}. 
Thus, if $\mathcal{S}\subseteq\mathcal{A}$ is an operator subsystem for which $\mathcal{S}\subseteq\mathcal M_\phi$, then $\phi_{\vert{\mathrm{C}^*(\mathcal{S})}}:\mathrm{C}^*(\mathcal{S})\rightarrow\mathcal{B}$ is a $^*$-homomorphism. In the case of the restriction a unital $^*$-homomorphism
$\pi:\mathcal{A}\rightarrow\mathcal{B}$ to an operator subsystem $\mathcal{S}$ for which $\mathcal{A}=\mathrm{C}^*(\mathcal{S})$, 
if $\phi:\mathcal{A}\rightarrow\mathcal{B}$ is a ucp map such $\phi_{\vert\mathcal{S}}=\pi_{\vert\mathcal{S}}$ and 
$\mathcal{S}\subseteq\mathcal{M}_\phi$, then $\phi=\pi$,
since the hypotheses imply $\phi$ is a $^*$-homomorphism of $\mathcal{A}$ and the $^*$-homomorphisms
$\pi$ and $\phi$ agree on the generating set $\mathcal{S}$ of $\mathcal{A}$.

%%%%%%%%%%%%%%%%%%%%%%%%%%%
\begin{proposition}\label{main_thm}
Let $\cl A \subseteq \cl B(K)$ be a unital C$^*$-algebra and let $\cl X = \mathrm{span}(\mathcal{R}) \subseteq \cl B(K)$, for some subset $\mathcal{R} \subseteq \mathcal{B}(K)$. 
Suppose that
\[
f^*f, ff^* \in \cl A, \text{ for all } f \in \cl R.
\]
Then the operator system 
\[
\cl S := \cl X^* + \cl A + \cl X
\]
is hyperrigid in $\ca(\cl S)$.
In particular we have $\mathrm{C}^*_\mathrm{e}(\mathcal{S}) \cong \mathrm{C}^*(\mathcal{S}) \subseteq \mathcal{B}(K)$.
The same conclusions hold for $\overline{\mathcal{S}}$.
\end{proposition}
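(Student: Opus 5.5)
We will use Arveson's unique extension characterization of hyperrigidity together with the multiplicative domain argument recalled just before the statement. First note that $\cl S$ is indeed an operator system: it is self-adjoint by construction and contains $1 \in \cl A$. Also $\ca(\cl S)$ is generated as a C$^*$-algebra by $\cl A \cup \cl R$. To avoid separability assumptions, we argue directly: we show that for every unital $^*$-representation $\pi\colon \ca(\cl S)\to \cl B(H)$ and every ucp map $\phi\colon\ca(\cl S)\to\cl B(H)$ with $\phi|_{\cl S}=\pi|_{\cl S}$, we have $\phi=\pi$. This unique extension property is what we take as hyperrigidity. In the separable case it is equivalent to the sequential definition by \cite[Theorem 2.1]{Arv11Bdry2}; in general it is the standard formulation.

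The key step is to show that $\cl S\subseteq \cl M_\phi$. Since $\cl M_\phi$ is a C$^*$-algebra, it suffices to show $\cl A\subseteq\cl M_\phi$ and $\cl R\subseteq \cl M_\phi$.
\begin{itemize}
\item For $a\in\cl A$, both $a^*a$ and $aa^*$ lie in $\cl A\subseteq\cl S$, since $\cl A$ is an algebra. Hence $\phi(a^*a)=\pi(a^*a)=\pi(a)^*\pi(a)=\phi(a)^*\phi(a)$, and similarly for $aa^*$.
\item For $f\in\cl R$, we have $f\in\cl X\subseteq\cl S$, and by hypothesis $f^*f,ff^*\in\cl A\subseteq\cl S$. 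Therefore $\phi(f^*f)=\pi(f^*f)=\pi(f)^*\pi(f)=\phi(f)^*\phi(f)$, and likewise $\phi(ff^*)=\phi(f)\phi(f)^*$.
\end{itemize}
So $\cl A\cup\cl R\subseteq\cl M_\phi$, and hence $\cl S\subseteq\ca(\cl A\cup\cl R)\subseteq \cl M_\phi$. By Choi's theorem, $\phi$ is a $^*$-homomorphism on $\ca(\cl S)$ that agrees with $\pi$ on the generators, so $\phi=\pi$. This is exactly the Kavruk-type ``enough unitaries'' argument, with unitaries replaced by elements whose squares $f^*f$ and $ff^*$ land back in the system.

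The consequence $\mathrm{C}^*_\mathrm{e}(\cl S)\cong\ca(\cl S)$ follows as explained before the statement. Every irreducible representation of $\ca(\cl S)$ has the unique extension property, hence is a boundary representation. The Shilov boundary ideal is therefore the intersection of all irreducible kernels, which is zero, so by \cite{Arv11Bdry2,DK15} we get $\ca(\cl S)=\mathrm{C}^*_\mathrm{e}(\cl S)$.

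For $\overline{\cl S}$, we note that $\ca(\overline{\cl S})=\ca(\cl S)$. Moreover, if $\phi$ agrees with $\pi$ on $\cl S$, then by norm continuity of both maps it agrees with $\pi$ on $\overline{\cl S}$. Conversely, agreement on $\overline{\cl S}$ implies agreement on $\cl S$. Thus the unique extension property, and with it the envelope statement, transfers verbatim. The only point requiring care is the first bullet: we must use that $\cl A$ is closed under multiplication, so that products of elements of $\cl A$ stay inside $\cl S$. Otherwise the argument is routine, and we expect no real obstacle.
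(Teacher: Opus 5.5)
Your proposal is correct and follows essentially the same route as the paper: you show $\cl A\cup\cl R\subseteq\cl M_\phi$, using the hypothesis $f^*f, ff^*\in\cl A\subseteq\cl S$, and then invoke Choi's theorem to conclude $\phi=\pi$. Your remarks on separability, on deducing the envelope via boundary representations, and on passing to $\overline{\cl S}$ fill in points the paper leaves implicit.
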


\begin{proof}
Let $\rho \colon \ca(\cl S) \to \cl B(H)$ be a unital $^*$-representation and let $\psi \colon \ca(\cl S) \to \cl B(H)$ be a ucp map such that $\psi_{\vert\cl S} = \rho_{\vert\cl S}$.
In order to deduce hyperrigidity, it suffices to show that $\cl S$ is inside the multiplicative domain $\cl M_\psi$ of $\psi$.
As the multiplicative domain is itself a C$^*$-subalgebra of $\ca(\cl S)$ it suffices to show that $\cl A, \cl R \subseteq \cl M_\psi$.

For $a \in \cl A$ we have that $\psi(a) = \rho(a)$ and $\rho $ is a $^*$-representation, hence
\[
\psi(a^*a) = \rho(a^*a) = \rho(a)^* \rho(a) = \psi(a)^* \psi(a);
\]
likewise $\psi(aa^*) = \psi(a) \psi(a)^*$, and thus $\cl A \subseteq \cl M_\psi$.

Next, for $f \in \cl R$ we have by assumption that $f^*f \in \cl A$ and, therefore
\[
\psi(f^* f) = \rho(f^* f) = \rho(f)^* \rho(f) = \psi(f)^* \psi(f);
\]
likewise $\psi(ff^*) = \psi(f)\psi(f)^*$, and thus $\cl R \subseteq \cl M_\psi$ as required.
\end{proof}

An operator subsystem $\mathcal{S}$ of a C$^*$-algebra $\mathcal{A}$ is said to \emph{contain enough unitaries} \cite{Kav14} if $\mathcal{S}$ contains the unitary generators of $\mathcal{A}$.
The statement about the C$^*$-envelope in the following corollary is \cite[Proposition 5.6]{Kav14}.

%%%%%%%%%%%%%%%%%%%%%%%%%%%
\begin{corollary}
Let $\cl R \subseteq \cl B(K)$ be a set of unitaries.
Then the operator system
\[
  \cl S_{\cl R} := \ol{\spn}\{\mathbf{I}_K, u, u^* \mid u \in \cl R\}
\]
is hyperrigid inside $\ca(\cl R) \subseteq \cl B(K)$.
In particular, $\ca(\cl R)$ is the C$^*$-envelope of $\cl S_{\cl R}$.
\end{corollary}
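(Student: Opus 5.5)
The plan is to deduce the corollary directly from \autoref{main_thm}, taking $\cl A := \mathbb{C}\mathbf{I}_K$ and $\cl X := \spn(\cl R)$. Then
\[
\cl X^* + \cl A + \cl X = \spn\{\mathbf{I}_K, u, u^* \mid u \in \cl R\},
\]
whose norm closure is exactly $\cl S_{\cl R}$. The hypothesis of \autoref{main_thm} holds because every $u \in \cl R$ is unitary, so $u^*u = uu^* = \mathbf{I}_K \in \cl A$. The final clause of \autoref{main_thm}, which covers $\overline{\cl S}$, then shows that $\cl S_{\cl R}$ is hyperrigid in $\ca(\cl S_{\cl R})$.

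Next I will check that $\ca(\cl S_{\cl R}) = \ca(\cl R)$ inside $\cl B(K)$. The inclusion $\cl S_{\cl R} \subseteq \ca(\cl R)$ holds because $\ca(\cl R)$ is a closed $^*$-algebra containing $\cl R$. The identity also lies in $\ca(\cl R)$, since $u^*u = \mathbf{I}_K$ whenever $\cl R \neq \emptyset$; if $\cl R$ is empty I read $\ca(\cl R)$ as the unital algebra $\mathbb{C}\mathbf{I}_K$. Conversely, $\cl R \subseteq \cl S_{\cl R}$. Hence the two C$^*$-algebras coincide, and hyperrigidity holds inside $\ca(\cl R)$.

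For the C$^*$-envelope, I will use the remark made before \autoref{main_thm}. Hyperrigidity means every irreducible representation of $\ca(\cl R)$ has the unique extension property on $\cl S_{\cl R}$, so every irreducible representation is a boundary representation. By \cite{Arv11Bdry2,DK15} this gives $\mathrm{C}^*_\mathrm{e}(\cl S_{\cl R}) = \ca(\cl R)$, which is also the ``in particular'' part of \autoref{main_thm}. There is no genuine obstacle here; the only points needing care are taking the norm closure into account, which the $\overline{\cl S}$ clause handles, and confirming that the identity lies in $\ca(\cl R)$.
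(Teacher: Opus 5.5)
Your proposal is correct and is essentially the paper's proof: it applies \autoref{main_thm} with $\cl A := \mathbb{C}\mathbf{I}_K$ and $\cl X := \spn(\cl R)$, uses $u^*u = uu^* = \mathbf{I}_K$, and invokes the closure clause for $\overline{\cl S}$. Your check that $\ca(\cl S_{\cl R}) = \ca(\cl R)$ fills in a detail the paper leaves implicit.
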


\begin{proof}
Note that $\mathcal{S}_\mathcal{R} = \overline{\mathcal{X}^* + \mathcal{A} + \mathcal{X}}$ with $\mathcal{X} := \mathrm{span}(\mathcal{R})$ and $\mathcal{A} := \mathbb{C}\mathbf{I}_K$. 
The claim then follows from \autoref{main_thm}.
\end{proof}

%%%%%%%%%%%%%%%%%%%%%%%%%%%
\section{Operator systems induced by positivity domains in \'etale groupoids with compact unit space}
%%%%%%%%%%%%%%%%%%%%%%%%%%%

Let $\cl G \rightrightarrows \mathcal{G}^{(0)}$ be a second countable locally compact Hausdorff groupoid with Haar system.
We denote the source and range maps by $s$ and $r$ respectively.
If $x,y \in \mathcal{G}$ are two groupoid elements with $s(y)=r(x)$, we denote their composition by $y \circ x$.
Recall that the unit space $\cl G^{(0)}$ is a closed subset of $\cl G$, \cite[Lemma 2.3.2]{Sim18}. 
The groupoid $\cl G$ is called \emph{\'etale} if the source or the range map $s,r:\cl G\to\cl G^{(0)}$ is a local homeomorphism.
In this case both, $s$ and $r$, are local homeomorphisms and the unit space $\cl G^{(0)}$ is open in $\cl G$, \cite[Lemma 2.4.2]{Sim18}. 

A subset $B$ of an \'etale groupoid $\mathcal{G}$ is a \emph{bisection} if there is an open set $U \subseteq \mathcal{G}$ containing $B$ such that $r:U\to r(U)$ and $s:U\to s(U)$ are both homeomorphisms onto open subsets of $\cl G^{(0)}$.

The set of compactly supported functions on $\mathcal{G}$ is a $^*$-algebra when equip\-ped with the convolution product; in the \'etale case this convolution product is given by
\begin{align}
    (f \ast g)(x)
    := \sum_{y \in \mathcal{G}_{s(x)}} f(x\circ y^{-1}) g(y),
\end{align}
for all functions $f, g \in \mathrm{C}_\mathrm{c}(\mathcal{G})$, and all elements $x \in \mathcal{G}$. 
Here 
\begin{align} 
\mathcal{G}_u := \{y\in\mathcal{G} \mid s(y)=u\}
\end{align}
denotes the set of all elements with source $u\in\mathcal{G}^{(0)}$.
The completion of $\mathrm{C}_\mathrm{c}(\cl G)$ under the I-norm is the universal groupoid C$^*$-algebra denoted by $\mathrm{C}^*(\mathcal{G})$ \cite{Ren80}.
For details we refer to \cite{Sim18} in the \'etale case and to \cite{Wil19} in the general locally compact case.
At this point we only recall the fact that the groupoid C$^*$-algebra is densely spanned by functions compactly supported in bisections \cite[Lemma 3.1.3]{Sim18}, if $\mathcal{G}$ is \'etale:
\begin{align}
    \mathrm{C}^*(\mathcal{G})
    = \overline{\mathrm{C}_\mathrm{c}(\mathcal{G})}
    = \overline{\mathrm{span}}\{f \in \mathrm{C}_\mathrm{c}(\mathcal{G}) \mid \supp(f) \text{ is a bisection}\}.
\end{align}
In what follows we often identify $\mathrm{C}_\mathrm{c}(\mathcal{G})$ with its image in $\mathrm{C}^*(\mathcal{G})$. 
It should be clear from the context whether we regard it as a subspace of the C$^*$-algebra $\mathrm{C}_\mathrm{0}(\mathcal{G})$ or of $\mathrm{C}^*(\mathcal{G})$.

For a subset $\Omega \subseteq \mathcal{G}$ and $n\in\mathbb{N}$ we denote by $\Omega^{(n)}$ the set of $n$-tuples $(y_1,\dots,y_n) \in \Omega^n$ such that $s(y_i)=r(y_{i+1})$, for all $i=1,\dots,n-1$.

%%%%%%%%%%%%%%%%%%%%%%%%%%%
\begin{definition}
Let $\mathcal{G}$ be a locally compact Hausdorff groupoid.
An open subset $\Omega \subseteq \mathcal{G}$ is called a \emph{positivity domain} if $\Omega^{-1} = \Omega \supseteq \mathcal{G}^{(0)}$.

We denote by
\begin{align}
    \mathcal{G}(\Omega) := \bigcup_{n\in\mathbb{N}} \left\{ y_1\circ\cdots\circ y_n \mid (y_1,\dots,y_n) \in \Omega^{(n)} \right\}
        \rightrightarrows \mathcal{G}^{(0)}
\end{align}
the \emph{subgroupoid generated} by the positivity domain $\Omega$.
We say that $\Omega$ is \emph{non-degenerate} if $\overline{\mathcal{G}(\Omega)} = \mathcal{G}$.
\end{definition}

%%%%%%%%%%%%%%%%%%%%%%%%%%%
\begin{definition}
Let $\mathcal{G}$ be a locally compact Hausdorff groupoid with Haar system, and let $\Omega \subseteq \mathcal{G}$ be an open subset.
We write 
\begin{align}
   \mathcal{E}(\Omega) := \mathrm{span}\{f\in\mathrm{C}_\mathrm{c}(\mathcal{G}) \mid \supp(f) \subseteq \Omega\} \subseteq \mathrm{C}^*(\mathcal{G}).
\end{align}
\end{definition}

Note that if $\Omega$ is a positivity domain, then $\mathcal{E}(\Omega) \subseteq \mathrm{C}^*(\mathcal{G})$ is a self-adjoint subspace.
If moreover $\mathcal{G}^{(0)}$ is compact then $\mathcal{E}(\Omega)$ is unital, hence an operator system.
If $\Omega$ is a non-degenerate positivity domain, we have $\mathrm{C}^*(\mathcal{E}(\Omega)) = \mathrm{C}^*(\mathcal{G})$.

%%%%%%%%%%%%%%%%%%%%%%%%%%%
\begin{example}\label{ex:Tolerance-relation}
Let $X$ be a locally compact (Borel) measure space and consider the pair groupoid $\mathcal{G} := X \times X \rightrightarrows X$.
Let $\Omega \subseteq X \times X$ be a positivity domain.
Then $\Omega$ is a \emph{tolerance relation} on the set $X$, and the subgroupoid $\mathcal{G}(\Omega)\subseteq X \times X$ generated by $\Omega$ is an equivalence relation.
Note that $\Omega$ viewed as a positivity domain in the groupoid $\mathcal{G}(\Omega) \rightrightarrows X$ is non-degenerate.
The non-unital operator system $\mathcal{E}(\Omega)$ is studied in detail in \cite{CvS22}; 
in particular it is shown in \cite[Theorem 4.11]{CvS22} that if $X$ is a complete, locally compact path metric measure space with a measure $\mu$ of
full support then for $\Omega = \mathcal{R}_\varepsilon = \{(x,y) \mid d(x,y) < \varepsilon\}$ we have $\mathrm{C}^*_\mathrm{e}(\mathcal{E}(\mathcal{R}_\varepsilon)) = \mathrm{C}^*(X \times X) \cong \mathcal{K}(\mathrm{L}^2(X,\mu))$. 
\end{example}

%%%%%%%%%%%%%%%%%%%%%%%%%%%
\begin{theorem}\label{groupoid_op_sys}
Let $\cl G \rightrightarrows \mathcal{G}^{(0)}$ be an \'etale groupoid and assume that the unit space $\cl G^{(0)}$ is compact.
Let $\Omega \subseteq \cl G$ be a positivity domain.
Then the operator system $\mathcal{E}(\Omega)$ is hyperrigid, and the generated C$^*$-subalgebra $\mathrm{C}^*(\mathcal{E}(\Omega)) \subseteq \mathrm{C}^*(\mathcal{G})$ coincides with the C$^*$-envelope, $\mathrm{C}^*_\mathrm{e}(\mathcal{E}(\Omega))$.
In particular, if the positivity domain $\Omega$ is non-degenerate we have $\mathrm{C}^*_\mathrm{e}(\mathcal{E}(\Omega)) = \mathrm{C}^*(\mathcal{G})$.
\end{theorem}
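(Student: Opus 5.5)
We will deduce the statement from \autoref{main_thm} with $K$ a Hilbert space carrying a faithful representation of $\mathrm{C}^*(\mathcal{G})$. The choices are $\mathcal{A} := \mathrm{C}(\mathcal{G}^{(0)})$, viewed inside $\mathrm{C}^*(\mathcal{G})$, and $\mathcal{R}$ the set of $f \in \mathrm{C}_\mathrm{c}(\mathcal{G})$ with $\supp(f) \subseteq \Omega$ and $\supp(f)$ a bisection. Since $\mathcal{G}$ is étale, $\mathcal{G}^{(0)}$ is open and closed in $\mathcal{G}$. It is compact by assumption and contained in $\Omega$, so $\mathcal{A} \subseteq \mathcal{E}(\Omega)$ and $\mathcal{A}$ contains the unit $1_{\mathcal{G}^{(0)}}$. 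We then have to check two things: that $\mathcal{E}(\Omega) = \mathcal{X}^* + \mathcal{A} + \mathcal{X}$ with $\mathcal{X} = \mathrm{span}(\mathcal{R})$, and that $f^*f, ff^* \in \mathcal{A}$ for all $f \in \mathcal{R}$.

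For the decomposition, the inclusion $\supseteq$ is clear. The set $\Omega$ is symmetric, so $f^*$ is again supported in $\Omega$, and adjoints of bisection-supported functions are bisection-supported. For $\subseteq$, take $g \in \mathrm{C}_\mathrm{c}(\mathcal{G})$ with $\supp(g) \subseteq \Omega$. Since $\mathcal{G}$ is étale, every point of $\Omega$ has an open bisection neighbourhood contained in $\Omega$, because open bisections form a basis and $\Omega$ is open. Compactness of $\supp(g)$ gives finitely many open bisections $U_1, \dots, U_n \subseteq \Omega$ covering it. A partition of unity $\{\phi_i\}$ subordinate to this cover yields $g = \sum_i \phi_i g$ with $\supp(\phi_i g) \subseteq U_i$ compact. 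Hence each $\phi_i g \in \mathcal{R}$, so $\mathcal{E}(\Omega) \subseteq \mathcal{X}$ already. This is the usual argument behind \cite[Lemma 3.1.3]{Sim18}, localised to $\Omega$.

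For the key identity, take $f$ supported in a bisection $B$. We compute $(f^* \ast f)(x) = \sum_{y \in \mathcal{G}_{s(x)}} \overline{f(y \circ x^{-1})} f(y)$. Because $s$ is injective on $B$, at most one $y$ with source $s(x)$ lies in $B$. If $x$ is not a unit, then $y$ and $y\circ x^{-1}$ are distinct elements of $B$ with the same range, contradicting injectivity of $r$ on $B$. So $f^* \ast f$ vanishes off $\mathcal{G}^{(0)}$ and equals $|f|^2 \circ s^{-1}$ on $s(B)$. Thus $f^*f \in \mathrm{C}(\mathcal{G}^{(0)}) = \mathcal{A}$, and symmetrically $ff^* = |f|^2\circ r^{-1} \in \mathcal{A}$. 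This convolution computation is the only slightly delicate point. One must note that continuity and compact support of $|f|^2\circ s^{-1}$ follow from $s|_U$ being a homeomorphism onto an open set, and that $\mathcal{G}^{(0)}$ is clopen, so this function lies in $\mathrm{C}(\mathcal{G}^{(0)})$.

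\autoref{main_thm} then gives hyperrigidity of $\mathcal{E}(\Omega)$ in $\mathrm{C}^*(\mathcal{E}(\Omega))$. It also gives $\mathrm{C}^*_\mathrm{e}(\mathcal{E}(\Omega)) \cong \mathrm{C}^*(\mathcal{E}(\Omega))$ via the embedding into $\mathrm{C}^*(\mathcal{G})$. If $\Omega$ is non-degenerate, products of functions supported in $\Omega$ span functions supported on $\mathcal{G}(\Omega)$, which is dense in $\mathcal{G}$. Combined with the bisection density of $\mathrm{C}_\mathrm{c}(\mathcal{G})$, this gives $\mathrm{C}^*(\mathcal{E}(\Omega)) = \mathrm{C}^*(\mathcal{G})$, as the paper already noted after the definition of $\mathcal{E}(\Omega)$.
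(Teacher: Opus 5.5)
Your argument for hyperrigidity and for $\mathrm{C}^*_\mathrm{e}(\mathcal{E}(\Omega)) = \mathrm{C}^*(\mathcal{E}(\Omega))$ is correct and is the paper's own. You use the same $\mathcal{A}$ and the same partition-of-unity reduction to bisection-supported functions inside $\Omega$, then apply \autoref{main_thm}. The only differences are that you verify $f^*\ast f, f\ast f^*\in\mathrm{C}(\mathcal{G}^{(0)})$ by hand instead of citing Sims' lemma, and that you name $\mathcal{R}$ explicitly where the paper's proof leaves it implicit.

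The final paragraph, however, contains a step that fails. Density of $\mathcal{G}(\Omega)$ in $\mathcal{G}$ does not make $\mathrm{C}_\mathrm{c}(\mathcal{G}(\Omega))$ dense in $\mathrm{C}^*(\mathcal{G})$. Since $\supp(f_1\ast f_2)\subseteq\supp(f_1)\supp(f_2)$, every element of the $^*$-algebra generated by $\mathcal{E}(\Omega)$ vanishes off $\mathcal{G}(\Omega)$. On the other hand, for \'etale $\mathcal{G}$ and $h\in\mathrm{C}_\mathrm{c}(\mathcal{G})$ one has $|h(\gamma)| = |\langle\lambda_{s(\gamma)}(h)\delta_{s(\gamma)},\delta_\gamma\rangle|\le\|h\|_{\mathrm{C}^*(\mathcal{G})}$, where $\lambda_u$ is the regular representation on $\ell^2(\mathcal{G}_u)$. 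Hence any $f\in\mathrm{C}_\mathrm{c}(\mathcal{G})$ with $f(\gamma)=1$ at some $\gamma\notin\mathcal{G}(\Omega)$ lies at distance at least $1$ from $\mathrm{C}^*(\mathcal{E}(\Omega))$. What you need is therefore $\mathcal{G}(\Omega)=\mathcal{G}$, not $\overline{\mathcal{G}(\Omega)}=\mathcal{G}$.

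This is a real difference, as the following example shows. Take $\mathcal{G}=[0,1]\times\{0,1\}^2$ with $r(x,(i,j))=(x,i)$, $s(x,(i,j))=(x,j)$ and $(x,(i,j))\circ(x,(j,k))=(x,(i,k))$. This groupoid is \'etale with compact unit space, and $\mathrm{C}^*(\mathcal{G})\cong\mathrm{C}([0,1],\mathcal{M}_2)$. Let $\Omega=\mathcal{G}^{(0)}\cup\big((0,1]\times\{(0,1),(1,0)\}\big)$. Then $\mathcal{G}(\Omega)=\Omega$ is dense, so $\Omega$ is non-degenerate. But $\mathrm{C}^*(\mathcal{E}(\Omega))$ sits inside $\{F : F(0)\text{ is diagonal}\}$, so it has a character. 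Hence it is not even isomorphic to $\mathrm{C}([0,1],\mathcal{M}_2)$, and by the first part of the theorem it is the C$^*$-envelope.

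So this last assertion cannot be proved as stated. The paper's proof only asserts it, as does the remark after the definition of $\mathcal{E}(\Omega)$ that you invoke, so the defect lies in the statement rather than in your route. Under the hypothesis $\mathcal{G}(\Omega)=\mathcal{G}$, your product-of-bisections argument does give $\mathrm{C}^*(\mathcal{E}(\Omega))=\mathrm{C}^*(\mathcal{G})$. That hypothesis is what the discrete group case and the proof of \autoref{masa-bim-uhf} actually provide.
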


\begin{proof}
Set 
\begin{align}
  \mathcal{A} &:= \mathrm{span}\{f\in \mathrm{C}_\mathrm{c}(\mathcal{G}) \mid \supp(f) \subseteq \mathcal{G}^{(0)}\}
\end{align}
and
\begin{align}
    \mathcal{X} &:= \mathcal{E}(\Omega) = \mathrm{span}\{f \in \mathrm{C}_\mathrm{c}(\mathcal{G}) \mid \supp(f) \subseteq \Omega\}.
\end{align}
As $\cl G$ is \'etale and $\Omega$ is open by assumption, for any $f\in\mathrm{C}_\mathrm{c}(\Omega)$ we can cover $\supp(f)$ with open bisections, which are subsets of $\Omega$, and then use compactness to pass to a finite subcover $U_1,\ldots, U_n$. 
Choose a partition of unity $\{h_i\}_{i=1}^n$ subordinated to $\{U_i\}_{i=1}^n$.
Then $f=\sum_{i=1}^n fh_i$ and $f_i:=fh_i\in \mathrm{C}_\mathrm{c}(\Omega)$ are supported in $U_i$. 
Therefore 
\begin{align}
    \mathcal{E}(\Omega)=\mathrm{span}\{f\in \mathrm{C}_\mathrm{c}(\Omega) \mid \supp(f) \text{ is a bisection}\}.    
\end{align}
By \cite[Lemma 3.14]{Sim18}, we have that if $f \in \mathrm{C}_\mathrm{c}(\mathcal{G})$ is supported in a bisection, then $f^*\ast f, f\ast f^*\in \mathrm{C}(\cl G^{(0)}) = \mathcal{A}$.
Noting that $\mathcal{E}(\Omega) = \mathcal{X}^* + \mathcal{A} + \mathcal{X}$, we now see from \autoref{main_thm} that $\mathcal{E}(\Omega)$ is hyperrigid and 
\begin{align}\label{eqn:C*-env-C*-Omega}
    \mathrm{C}^*_\mathrm{e}(\mathcal{E}(\Omega)) = \mathrm{C}^*(\mathcal{E}(\Omega)) \subseteq \mathrm{C}^*(\mathcal{G}).
\end{align}
Moreover, if $\Omega$ is non-degenerate we have equality in (\ref{eqn:C*-env-C*-Omega}). 
\end{proof}

We finish this section with the observation that the operator systems $\mathcal{E}(\Omega)$ are bimodular for the action of the abelian 
algebra $\mathrm{C}_\mathrm{c}(\mathcal{G}^{(0)})$.
For an \'etale groupoid $\mathcal{G}$, $\mathcal{G}^{(0)}$ is open and hence $\mathrm{C}_\mathrm{c}(\mathcal{G}^{(0)})$ can be considered as a subalgebra of $\mathrm{C}_\mathrm{c}(\mathcal{G})$ -- by extending the functions on $\mathcal{G}^{(0)}$ by $0$ to all of $\mathcal{G}$;
note that in the \'etale case $\mathrm{C}_0(\mathcal{G}^{(0)})$ is even a C$^*$-subalgebra of $\mathrm{C}^*(\mathcal{G})$. 
In this case one readily computes that, for all, $f \in \mathrm{C}_\mathrm{c}(\mathcal{G})$ and $g \in \mathrm{C}_\mathrm{c}(\mathcal{G}^{(0)}) \subseteq \mathrm{C}_\mathrm{c}(\mathcal{G})$,  we have
\begin{equation}\label{eq:conv}
(f \ast g) (x) = f(x)g(s(x)) \text{ and }(g \ast f)(x) = g(r(x))f(x), \text{ for all }x\in\mathcal{G}.
\end{equation}
For $T \in \mathrm{C}^*(\mathcal{G})$, and $g,h\in\mathrm{C}_\mathrm{c}(\mathcal{G}^{(0)})$, we denote the thus induced left and right actions by $g \cdot T \cdot h \in \mathrm{C}^*(\mathcal{G})$. 
If $\mathcal{G}$ is a general locally compact Hausdorff groupoid with Haar system, we define the left and right $\mathrm{C}_\mathrm{c}(\mathcal{G}^{(0)})$-actions on $\mathrm{C}_\mathrm{c}(\mathcal{G})$ by the right-hand sides of (\ref{eq:conv}) and extend them to actions on $\mathrm{C}^*(\mathcal{G})$. 
We say that a subspace $E \subseteq \mathrm{C}^*(\mathcal{G})$ is \emph{$\mathrm{C}_\mathrm{c}(\mathcal{G}^{(0)})$-bimodular} if 
\begin{align} 
\mathrm{C}_\mathrm{c}(\mathcal{G}^{(0)}) \cdot E \subseteq E
\, \text{ and } \,
E \cdot \mathrm{C}_\mathrm{c}(\mathcal{G}^{(0)}) \subseteq E.
\end{align}

%%%%%%%%%%%%%%%%%%%%%%%%%%%
\begin{lemma}\label{lem:bimod}
Let $\mathcal{G}$ be a locally compact Hausdorff groupoid with Haar system, and let $\Omega \subseteq \mathcal{G}$ be a positivity domain.
Then the self-adjoint operator space $\mathcal{E}(\Omega)$ is $\mathrm{C}_\mathrm{c}(\mathcal{G}^{(0)})$-bimodular.
\end{lemma}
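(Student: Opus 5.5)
The plan is to verify bimodularity directly on spanning elements, using the explicit formulas for the actions. By definition, for $g,h\in\mathrm{C}_\mathrm{c}(\mathcal{G}^{(0)})$ and $f\in\mathrm{C}_\mathrm{c}(\mathcal{G})$ the actions are
\[
(g\cdot f)(x) = g(r(x))f(x), \qquad (f\cdot h)(x) = f(x)h(s(x)), \qquad x\in\mathcal{G}.
\]
In the étale case these agree with convolution by \eqref{eq:conv}. In the general case they are taken as definitions on $\mathrm{C}_\mathrm{c}(\mathcal{G})$ and extended to $\mathrm{C}^*(\mathcal{G})$. Since $\mathcal{E}(\Omega)$ is the linear span of functions $f\in\mathrm{C}_\mathrm{c}(\mathcal{G})$ with $\supp(f)\subseteq\Omega$, and the actions are linear in $f$, it suffices to treat a single such $f$.

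First, I check that $g\cdot f$ and $f\cdot h$ lie in $\mathrm{C}_\mathrm{c}(\mathcal{G})$. The functions $g\circ r$ and $h\circ s$ are continuous on $\mathcal{G}$ because $r,s$ are continuous. So $x\mapsto g(r(x))f(x)$ is continuous. Its support is closed and contained in $\supp(f)$, which is compact, so it is compactly supported. The same argument applies to $f\cdot h$.

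Second, I check the support condition. If $g(r(x))f(x)\neq 0$ then $f(x)\neq0$, so $\{g\cdot f\neq 0\}\subseteq\{f\neq0\}$. Taking closures gives $\supp(g\cdot f)\subseteq\supp(f)\subseteq\Omega$, and likewise $\supp(f\cdot h)\subseteq\Omega$. Hence both products lie in $\mathcal{E}(\Omega)$. Extending by linearity gives $\mathrm{C}_\mathrm{c}(\mathcal{G}^{(0)})\cdot\mathcal{E}(\Omega)\subseteq\mathcal{E}(\Omega)$ and $\mathcal{E}(\Omega)\cdot\mathrm{C}_\mathrm{c}(\mathcal{G}^{(0)})\subseteq\mathcal{E}(\Omega)$.

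There is no real obstacle. The one point to be careful about is that the action on $\mathcal{E}(\Omega)\subseteq\mathrm{C}^*(\mathcal{G})$ is given by the pointwise formulas on $\mathrm{C}_\mathrm{c}(\mathcal{G})$, before any completion. Since $\mathcal{E}(\Omega)$ sits inside $\mathrm{C}_\mathrm{c}(\mathcal{G})$ and is not closed, no continuity of the extended action is needed. Neither the positivity-domain symmetry $\Omega=\Omega^{-1}$ nor $\mathcal{G}^{(0)}\subseteq\Omega$ is used; the argument works for any open $\Omega$.
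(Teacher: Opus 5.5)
Your proposal is correct and takes the same route as the paper: both use the pointwise formulas for the left and right actions to see that $\supp(g\cdot f)$ and $\supp(f\cdot h)$ are contained in $\supp(f)\subseteq\Omega$. Your extra checks are correct details the paper leaves implicit, namely that the products are again continuous and compactly supported, and that only the openness of $\Omega$ is used.
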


\begin{proof}
Let $f \in \mathrm{C}_\mathrm{c}(\Omega)$, and $g \in \mathrm{C}_\mathrm{c}(\mathcal{G}^{(0)})$. 
It follows  from (\ref{eq:conv}) that
\begin{align} 
\supp(f \ast g) \cup \supp(g \ast f) \subseteq \supp(f) \subseteq \Omega,
\end{align}
and therefore $\mathcal{E}(\Omega) = \mathrm{C}_\mathrm{c}(\Omega)$ is $\mathrm{C}_\mathrm{c}(\mathcal{G}^{(0)})$-bimodular.
\end{proof}
In the next two sections we address the question whether the converse to this lemma holds, i.e.\ given a $\mathrm{C}_\mathrm{c}(\mathcal{G}^{(0)})$-bimodular self-adjoint operator subspace $E$ of $\mathrm{C}^*(\mathcal{G})$, whether there is a positivity domain $\Omega \subseteq \mathcal{G}$ such that $E = \overline{\mathcal{E}(\Omega)}$.

%%%%%%%%%%%%%%%%%%%%%%%%%%%
\section{Operator systems arising from graphs and Cantor graphs}\label{sec:cantor}
%%%%%%%%%%%%%%%%%%%%%%%%%%%

In this section we restrict ourselves to operator systems arising from finite and infinite graphs, and show that they are hyperrigid in the C$^*$-algebra they generate.  
We begin by spelling out \autoref{ex:Tolerance-relation} in the finite case.

%%%%%%%%%%%%%%%%%%%%%%%%%%%
\begin{example}\label{ex:Graph}
In \autoref{ex:Tolerance-relation} take $\mathcal{X}$ to be the set with $n$ elements, $\mathcal{X} := [n]$.
Then positivity domains $\Omega$ in the pair groupoid $\mathcal{G} := [n] \times [n] \rightrightarrows [n]$ correspond exactly to undirected graphs on $n$ vertices (say, without self-loops), and the subgroupoid $\mathcal{G}(\Omega) \subseteq [n] \times [n]$ is the path space of the graph $\Omega$.  
Note that $\mathcal{G}(\Omega) = [n] \times [n]$ if and only if the graph $\Omega$ is connected.

The groupoid C$^*$-algebra of the pair groupoid is the full matrix algebra $\mathrm{C}^*([n]\times[n]) = \mathcal{M}_n$, and the operator subsystem $\mathcal{E}(\Omega)$ is the graph operator system $\mathcal{S}_\Omega$ spanned by the matrix units $E_{ij}$, for $(i,j)$ an edge of $\Omega$, or $i=j$.
The C$^*$-subalgebra generated by $\mathcal{E}(\Omega)$ is
\begin{align}\label{eqn:C*-graph}
    \mathrm{C}^*(\mathcal{E}(\Omega)) = \bigoplus_{l=1}^k \mathcal{M}_{n_l} 
    =\mathrm{C}^*(\mathcal{G}(\Omega)),    
\end{align}
where $k$ is the number of connected components of the graph $\Omega$ and $n_l$ the number of vertices in the $l$-th connected component; 
by \cite[Theorem 3.2]{OP15} we have that the C$^*$-envelope $\mathrm{C}^*_\mathrm{e}(\mathcal{S}_\Omega)$ of the graph operator system coincides with (\ref{eqn:C*-graph}). 
In particular, if the graph $\Omega$ is connected, then $\mathrm{C}^*_\mathrm{e}(\mathcal{S}_\Omega) = \mathcal{M}_n$.
\end{example}

As explained in \autoref{ex:Graph}, the result of Ortiz and Paulsen \cite{OP15} shows the C$^*$-envelope $\mathrm{C}^*_\mathrm{e}(\mathcal{S}_\Omega)$ of $\cl S_\Omega$ is given by $ \mathrm{C}^*(\mathcal{E}(\Omega))$.
\autoref{groupoid_op_sys} yields the following additional information.

%%%%%%%%%%%%%%%%%%%%%%%%%%%
\begin{corollary}\label{pr_gr_hpr}
If $\Omega$ is an undirected graph on $n$ vertices, then the graph operator system $\cl S_\Omega$ is hyperrigid.
\end{corollary}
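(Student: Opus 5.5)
The plan is to realize $\mathcal{S}_\Omega$ as $\mathcal{E}(\Omega)$ for the pair groupoid $\mathcal{G} := [n]\times[n] \rightrightarrows [n]$, exactly as in \autoref{ex:Graph}, and then invoke \autoref{groupoid_op_sys}. First we regard the undirected graph $\Omega$ (edge set together with the diagonal) as the subset
\[
\Omega' := \{(i,j) \mid (i,j) \text{ is an edge of } \Omega\} \cup \{(i,i) \mid i \in [n]\} \subseteq [n]\times[n].
\]
Since the graph is undirected, $\Omega'$ is closed under $(i,j)\mapsto(j,i)$, which is the groupoid inversion. It contains the unit space $\{(i,i)\}\cong[n]$ by construction. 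Because $[n]\times[n]$ carries the discrete topology, every subset is open, so $\Omega'$ is a positivity domain.

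Next we check the hypotheses of \autoref{groupoid_op_sys}. The pair groupoid on a finite discrete set is \'etale, since $r$ and $s$ are maps between discrete spaces and hence local homeomorphisms. Its unit space $[n]$ is finite, hence compact. With counting measure, convolution on $\mathrm{C}_\mathrm{c}([n]\times[n])$ is matrix multiplication, identifying the indicator of $(i,j)$ with $E_{ij}$, so $\mathrm{C}^*(\mathcal{G}) = \mathcal{M}_n$. Under this identification, $\mathcal{E}(\Omega') = \mathrm{span}\{E_{ij} \mid (i,j)\in\Omega'\} = \mathcal{S}_\Omega$. All spaces are finite dimensional, so no closure issues arise.

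Finally, \autoref{groupoid_op_sys} gives that $\mathcal{E}(\Omega') = \mathcal{S}_\Omega$ is hyperrigid in $\mathrm{C}^*(\mathcal{S}_\Omega)$. It also gives $\mathrm{C}^*_\mathrm{e}(\mathcal{S}_\Omega) = \mathrm{C}^*(\mathcal{S}_\Omega) = \bigoplus_l \mathcal{M}_{n_l}$, consistent with \cite{OP15}.

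There is no genuine obstacle. The only point requiring care is the bookkeeping identification of $\mathrm{C}^*([n]\times[n])$ with $\mathcal{M}_n$ and of indicator functions with matrix units. Alternatively, one could apply \autoref{main_thm} directly with $\mathcal{A}$ the diagonal matrices and $\mathcal{R}=\{E_{ij}\}$, since $E_{ij}^*E_{ij}=E_{jj}$ and $E_{ij}E_{ij}^*=E_{ii}$ both lie in $\mathcal{A}$.
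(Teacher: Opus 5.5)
Your proposal is correct and follows the paper's route: identify $\mathcal{S}_\Omega$ with $\mathcal{E}(\Omega)$ inside the pair groupoid $[n]\times[n]$, which is \'etale with compact unit space, as in \autoref{ex:Graph}, and then apply \autoref{groupoid_op_sys}. Your alternative of applying \autoref{main_thm} directly, with $\mathcal{A}$ the diagonal matrices and $\mathcal{R}$ the edge matrix units, is simply what the proof of that theorem reduces to in this case.
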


\par We conclude this section with a discussion of Cantor graphs and the associated operator systems realized in the UHF setting. 
The interplay between graphs and certain closed, open and symmetric subsets of the space $\prod_{n=1}^\infty[d_n]\times \prod^{\infty}_{n=1}[d_n]$ called \emph{Cantor graphs} was initiated in \cite{Pow93}. In the operator system picture, it was shown in \cite{MT18} that these correspond precisely to concrete operator systems in UHF C$^*$-algebras that are bimodules over the canonical maximal abelian self-adjoint algebra, extending the work in \cite{PPS89,OP15}.   

Recall that a \emph{UHF algebra} is a C$^*$-algebra $\mathcal{A}_\infty$ which arises as the inductive limit of an inductive system 
\begin{align} 
\mathcal{M}_{n_1}\stackrel{\pi_{2,1}}{\longrightarrow} 
\cdots
\stackrel{\pi_{k+1,k}}{\longrightarrow}
\mathcal{M}_{n_{k+1}}\stackrel{\pi_{k+2,k+1}}{\longrightarrow}
\cdots,
\end{align}
where $\pi_{k+1,k}:\mathcal{M}_{n_k}\rightarrow \mathcal{M}_{n_{k+1}}$ is a unital $^*$-homomorphism for every $k\in\bb{N}$.
For every $k\in\mathbb{N}$, we denote by $\pi_{\infty,k} : \mathcal{M}_{n_k} \rightarrow \mathcal{A}_\infty$ the unique unital $^*$-homomorphism satisfying $\pi_{\infty,k+1} \circ \pi_{k+1,k} = \pi_{\infty,k}$.
A \emph{maximal abelian self-adjoint algebra (masa)} $\cl{D}$ of a C$^*$-algebra $\cl{A}$ is a C$^*$-subalgebra of $\cl{A}$  that is abelian and not properly contained in any other abelian C$^*$-subalgebra of $\cl{A}$. 
In the context of UHF algebras it follows by \cite[Proposition 4.1]{Pow93} that $\cl{D}_\infty:=\displaystyle\lim_{\longrightarrow}(\cl{D}_{n_k},\pi_{k+1,k})$ is a masa, called the \emph{canonical masa}, in $\cl{A}_\infty:=\displaystyle\lim_{\longrightarrow}(\mathcal{M}_{n_k},\pi_{k+1,k})$, where $\mathcal{D}_n$ denotes the C$^*$-algebra of diagonal $n \times n$ matrices.

\par We highlight the fact that UHF algebras arise as full C$^*$-algebras of \'etale groupoids, slightly generalizing \cite[Example 2.3.5]{Sim18}.
A family $\{X_n\}_{n\in\bb{N}}$ of finite sets is called \emph{inductive} if $X_{n+1}=X_n\times[d_n]$ for every $n\in\bb{N}$, where $(d_n)_{n\in\bb{N}}$ are positive integers and $[d_n]=\{1,\ldots, d_n\}$.
Next, consider $X=\prod_{n=1}^\infty[d_n]$ to be one-side infinite strings where the $n$-th letter takes values from $1$ up to $d_n$, for every $n\in\bb{N}$. 
A \emph{word} is a string $x =x_k x_{k+1}\cdots x_{l}$, where $x_i\in [d_i]$, $i = k,\dots,l$, $k <l$; 
we write $|x| := l-k$ for its length;
if $k=1$ we call the word $x$ \emph{initial}, so that $X_n$ is the set  of initial words of length $n$.
We will use the notation $xx'$ for the concatenation of 
$x\in \prod_{i=k}^l [d_i]$, for some $k < l$,  
and  $x'\in \prod_{i=l+1}^m [d_i]$, for some $l < m$, or $x'\in \prod_{i=l+1}^{\infty} [d_i]$. 

Let $\mathcal{R}_X$ be the equivalence relation on $X$ given by $(x,y)\in \mathcal{R}_X$ if and only if there exists $n\in\bb{N}$ such that $x_i=y_i$, for all $i\geq n$. 
Note that $\cl{R}_X=\{(x,y)\mid x\sim y\}$ is a groupoid, where the unit space is $\mathcal{R}_X^{(0)} = \{(x,x) \mid x\in X\}$, the range and source maps are given by $r(x,y)=x$, $s(x,y)=y$,  where we customary identify $\mathcal{R}_X^{(0)}$ with $X$, the composition by $(x,y)(y,z)=(x,z)$, and the inverse by $(x,y)^{-1}=(y,x)$, for all $x,y,z\in X$.
The \emph{cylinder sets}
\begin{align} 
Z^n(u,v):=\left\{(ux,vx) \mid x\in 
\prod_{i= n+1}^\infty[d_i]
\right\},\; u,v\in 
X_n, 
\end{align}
form a base for a second countable locally compact Hausdorff topology on $\mathcal{R}_X$ turning it into a topological groupoid.
In particular, we have that any open subset $\Omega\subset\cl R_X$ contains $Z^{n+k}(ux,vx)$ if $(uy,vy)\in \Omega$, for some $y=xx'\in \prod_{i=n+1}^\infty [d_i]$, $x \in X_k$.

For convenience, we collect the following facts.
Let $n \leq m$ be positive integers, and let $u,v \in X_n$ and $x,y \in X_m$ be initial words.
Then we have
\begin{align}\label{eqn:intersection-cylinder-sets}
    Z^n(u,v) \cap Z^m(x,y) 
    = 
    \begin{cases}
        Z^m(x,y), \text{ if } \exists z, |z|=m-n : uz=x \text{ and } vz=y \\
        \emptyset, \text{ otherwise. }
    \end{cases}
\end{align}
Moreover, we have
\begin{align}\label{eqn:disjoint-union-cylinder-sets}
    Z^n(u,v) = \bigsqcup_{z \in [d_{n+1}]} Z^{n+1}(uz,vz) 
    = \bigsqcup_{|z'| = k} Z^{n+k}(uz',vz'),
\end{align}
for all $k \in \mathbb{N}$.

By \cite[Example 3.2.6]{Sim18} the groupoid C$^*$-algebra $\mathrm{C}^*(\cl R_X)$ is *-isomorphic to the above described UHF algebra $\cl{A}_\infty=\displaystyle\lim_{\longrightarrow}(\mathcal{M}_{X_k},\pi_{k+1,k})$.
For $u,v \in X_n$, we denote by $e_{u,v}^n$ the image under $\pi_{\infty,n} : \mathcal{M}_{X_n} \rightarrow \mathcal{A}_\infty$ of the according matrix unit $E_{u,v} \in \mathcal{M}_{X_n}$.
Under the isomorphism $\mathcal{A}_\infty \cong \mathrm{C}^*(\mathcal{R}_X)$, the element $e_{u,v}^n$ is identified with the indicator function $1_{Z^n(u,v)} \in \mathrm{C}_\mathrm{c}(\mathcal{R}_X)$, and we will freely switch between these two notations without further mentioning.
We set $$\mathcal{A}_n := \mathrm{span}\{e^n_{u,v} \mid u,v\in X_n \} = \pi_{\infty,n}(\mathcal{M}_{X_n}) \subseteq \mathcal{A}_\infty,$$ and note that $\overline{\bigcup_{n=1}^\infty \cl A_n} = \cl A_{\infty}$.

For $n \leq m$, (\ref{eqn:disjoint-union-cylinder-sets}) is equivalent to
\begin{align}\label{eqn:image-matrix-units}
    e^n_{u,v} = \sum_{z \in [d_{n+1}]} e^{n+1}_{uz,vz}
    = \sum_{|z'|=k} e^{n+k}_{uz',vz'},
\end{align}
for all $k\in\mathbb{N}$.
This implies, for $|u|=|v|=n < m = |x|=|y|$, 
\begin{align}\label{eqn:product-matrix-units}
	\begin{split}
        e^n_{u,v} e^m_{x,y} 
		= \sum_{|z|=m-n} e_{uz,vz}^m e_{x,y}^m 
		= \sum_{|z|=m-n} \delta_{vz,x} e_{uz,y}^m 
		= \delta_{v,x_1 \dots x_n} e_{ux_{n+1}\cdots x_m,y}^m.
	\end{split}
\end{align}

Let $\Omega\subseteq \cl R_{X}$ be a positivity domain, i.e.\ an open subset which is symmetric and contains the diagonal $\mathcal{R}_X^{(0)} \cong X$. 
By \autoref{groupoid_op_sys}, the operator system $\cl E(\Omega)$ is hyperrigid and the generated C$^*$-subalgebra of $\mathrm{C}^*(\cl R_{X})\cong \cl A_{\infty}$ coincides with its C$^*$-envelope, to wit $\mathrm{C}^*(\cl E(\Omega)) \cong \mathrm{C}^*_\mathrm{e}(\mathcal{E}(\Omega))$. 
We now characterise which operator subsystems of $\mathcal{A}_\infty$ arise in this way.
\autoref{prop:Masa-bimodule-positivity-domain} is essentially \cite[Theorem 6.12]{MT18}.
We give a proof that highlights the connection with our notion of positivity domains for groupoids and points towards a generalization to $\mathrm{C}_\mathrm{c}(\mathcal{G}^{(0)})$-bimodular operator subsystems of AF algebras. 

In the proof, we apply the following results of Power's \cite{Pow93}.
Denote by $\mathcal{A}_n := \pi_{\infty,n}(\mathcal{M}_{X_n}) \subseteq \mathcal{A}_\infty$ the image of the $|X_n| \times |X_n|$ matrix algebra in the UHF algebra $\mathcal{A}_\infty$.
Let $\widetilde{\cl A}_n := \overline{\mathcal{D}_\infty \cdot \mathcal{A}_n \cdot \mathcal{D}_\infty}$ be the $\cl D_\infty$-bimodule generated by $\cl A_n$. 
By \cite[Proposition 4.6]{Pow93}, there is a $\cl D_\infty$-bimodule conditional expectation $P_n:\cl A_\infty\to\widetilde{\cl A}_n$. Note that
$\widetilde{\cl A}_n=\text{span}\{de_{u,v}^n \mid 
u,v\in X_n,
d\in \cl D_\infty \}$.
We will need the following  lemma.

%%%%%%%%%%%%%%%%%%%%%%%%%%%
\begin{lemma}\label{lemm_supp}
Let $n < m $ be positive integers, and let  
$u,v\in X_m$ initial words of length $m$.
Then $P_n(e_{u,v}^m)=e_{u,v}^m$ if $Z^m(u,v)\subseteq Z^n(y,z)$ for some 
initial words $y,z \in X_n$, 
and $P_n(e_{u,v}^m)=0$ otherwise.  
\end{lemma}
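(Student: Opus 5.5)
The idea is to identify $P_n$ concretely as the multiplication operator by the indicator of the clopen set $W_n:=\bigcup_{y,z\in X_n} Z^n(y,z)$. Then we check that the candidate map agrees with $P_n$ on matrix units using only the $\mathcal{D}_\infty$-bimodularity and idempotence of $P_n$.

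\textbf{First case.} Suppose $Z^m(u,v)\subseteq Z^n(y,z)$ for some $y,z\in X_n$. By (\ref{eqn:intersection-cylinder-sets}) this means $u=yw$ and $v=zw$ for a word $w$ with $|w|=m-n$. We write $e^m_{u,v}$ inside $\widetilde{\mathcal{A}}_n$ as
\begin{align}
e^m_{u,v} = e^m_{yw,yw}\, e^n_{y,z}.
\end{align}
This identity follows from (\ref{eqn:product-matrix-units}), since $e^m_{yw,yw}e^n_{y,z}$ picks out the summand $e^m_{yw,zw}$. Here $e^m_{yw,yw}\in\mathcal{D}_\infty$ and $e^n_{y,z}\in\mathcal{A}_n$, so $e^m_{u,v}\in\widetilde{\mathcal{A}}_n$. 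Since $P_n$ is a conditional expectation onto $\widetilde{\mathcal{A}}_n$, it fixes this element, giving $P_n(e^m_{u,v})=e^m_{u,v}$.

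\textbf{Second case.} Suppose no such $y,z$ exist, i.e.\ $u_1\cdots u_n\neq v_1\cdots v_n$ as tails fail to match. More precisely, $Z^m(u,v)\not\subseteq Z^n(y,z)$ for all $y,z$, which by (\ref{eqn:intersection-cylinder-sets}) means the last $m-n$ letters of $u$ and $v$ differ. We use bimodularity to write
\begin{align}
P_n(e^m_{u,v}) = P_n(e^m_{u,u}e^m_{u,v}e^m_{v,v}) = e^m_{u,u}\,P_n(e^m_{u,v})\,e^m_{v,v}.
\end{align}
It then suffices to show $e^m_{u,u}\,T\,e^m_{v,v}=0$ for every $T\in\widetilde{\mathcal{A}}_n$. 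By linearity and continuity it is enough to take $T=d\,e^n_{y,z}$ with $d\in\mathcal{D}_\infty$.

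Because $\mathcal{D}_\infty$ is commutative, $e^m_{u,u}d=d\,e^m_{u,u}$. This gives
\begin{align}
e^m_{u,u}\, d\, e^n_{y,z}\, e^m_{v,v} = d\, e^m_{u,u} e^n_{y,z} e^m_{v,v}.
\end{align}
Expanding $e^n_{y,z}=\sum_{|w|=m-n}e^m_{yw,zw}$ via (\ref{eqn:image-matrix-units}), a product $e^m_{u,u}e^m_{yw,zw}e^m_{v,v}$ is nonzero only when $u=yw$ and $v=zw$. That would force $Z^m(u,v)\subseteq Z^n(y,z)$, which is excluded in this case. Hence the product is $0$, and $P_n(e^m_{u,v})=0$.

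\textbf{Expected obstacle.} The main delicate point is justifying the passage to generators $d\,e^n_{y,z}$. $P_n$ takes values in the closed span $\widetilde{\mathcal{A}}_n$, so we need the vanishing $e^m_{u,u}Te^m_{v,v}=0$ on a dense spanning set, together with continuity of multiplication. We must also use the description $\widetilde{\mathcal{A}}_n=\overline{\mathrm{span}}\{d e^n_{u,v}\}$ noted before the lemma. A second point to confirm is that Power's $P_n$ is a \emph{bimodule} map over $\mathcal{D}_\infty$ and is idempotent onto $\widetilde{\mathcal{A}}_n$; both properties are part of \cite[Proposition 4.6]{Pow93}.
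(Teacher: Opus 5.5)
Your proof is correct and is essentially the paper's argument. The first case is identical. In the second case you use the same tool, namely $\mathcal{D}_\infty$-bimodularity of $P_n$ combined with orthogonality of level-$m$ matrix units, only mirrored: you place the compression $e^m_{u,u}(\cdot)e^m_{v,v}$ outside $P_n$, where it fixes $e^m_{u,v}$ and kills $\widetilde{\mathcal{A}}_n$, whereas the paper places $\sum_{y,z,x} e^m_{yx,yx}(\cdot)e^m_{zx,zx}$ inside, where it fixes $\widetilde{\mathcal{A}}_n$ and kills $e^m_{u,v}$. Two harmless slips: the opening identification of $P_n$ with multiplication by $1_{W_n}$ is never used, and the phrase ``i.e.\ $u_1\cdots u_n\neq v_1\cdots v_n$'' is wrong, since it is the tails $u_{n+1}\cdots u_m$ and $v_{n+1}\cdots v_m$ that must differ, though you correct this in the next sentence.
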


\begin{proof}
 If $Z^m(u,v) \subseteq Z^n(y,z)$, for some 
 initial words $y, z \in X_n$ of length $n$, 
 then $P_n(e_{u,v}^m)=e_{u,v}^m$ is clear, since $P_n|_{\widetilde{\mathcal{A}}_n} = \mathrm{id}_{\widetilde{\mathcal{A}}_n}$ and $e_{u,v}^m \in \widetilde{\mathcal{A}}_n$.
 
 If $Z^m(u,v)$ is not a subset of $Z^n(y,z)$, for any 
 initial words $y, z \in X_n$ of length $n$, 
 then by (\ref{eqn:intersection-cylinder-sets}) and (\ref{eqn:disjoint-union-cylinder-sets}), we have $Z^m(u,v)\cap Z^m(yx,zx)=\emptyset$ for all words $x$ with $|x|=m-n$, and hence $e_{yx,yx}^me_{u,v}^me_{zx,zx}^m=0$, for all such $x$. 
 Note that $$P_n(e_{u,v}^m)=\sum_{y,z \in X_n}d_{y,z}e_{y,z}^n= \sum_{y,z \in X_n, |x|=m-n}d_{y,z}e_{yx,zx}^m,$$ for some  $d_{y,z}\in \cl D_\infty$.
 Applying the fact that $P_n$ is a $\mathcal{D}_\infty$-bimodule map, we have 
\begin{align}
    P_n(e_{u,v}^m)
    &=\sum_{y,z \in X_n,  |x|=m-n}e_{yx,yx}^mP_n(e_{u,v}^m)e_{zx,zx}^m\\
    &=P_n\Big(\sum_{y,z \in X_n, |x|=m-n}e_{yx,yx}^me_{u,v}^me_{zx,zx}^m\Big)=0,
\end{align}
and the proof is complete.
\end{proof}

%%%%%%%%%%%%%%%%%%%%%%%%%%%
\begin{theorem}\label{prop:Masa-bimodule-positivity-domain}
Let $X=\prod_{i=1}^\infty[d_i]$. 
Denote by $\mathcal{A}_\infty := \underset{\longrightarrow}{\lim} (\mathcal{M}_{X_n}, \pi_{n+1,n})$ the associated UHF algebra, and let $\mathcal{D}_\infty \subseteq \mathcal{A}_\infty$ be the canonical masa. 
Then for every concrete closed operator $\cl{D}_\infty$-subsystem $\cl{S}_\infty\subseteq\cl{A}_\infty$, there exists a positivity domain $\Omega \subseteq \mathcal{R}_X$, such that 
\begin{align} 
\mathcal{S}_\infty \cong \overline{\mathcal{E}(\Omega)},
\end{align}
under the canonical identification $\mathcal{A}_\infty \cong \mathrm{C}^*(\mathcal{R}_X)$. 
\end{theorem}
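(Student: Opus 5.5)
We define $\Omega$ to be the union of all cylinder sets whose indicator functions lie in $\mathcal{S}_\infty$:
\[
\Omega := \bigcup \{ Z^m(u,v) \mid m\in\mathbb{N},\ u,v\in X_m,\ e^m_{u,v}\in\mathcal{S}_\infty\}.
\]
This set is open, being a union of basic open sets. It is symmetric because $\mathcal{S}_\infty$ is self-adjoint and $(e^m_{u,v})^*=e^m_{v,u}$. It contains the unit space because $\mathbf{1}=\sum_{u\in X_1} e^1_{u,u}\in\mathcal{S}_\infty$, and $\mathcal{D}_\infty$-bimodularity then gives $e^1_{u,u}=e^1_{u,u}\mathbf{1}\in\mathcal{S}_\infty$; the sets $Z^1(u,u)$ cover the diagonal. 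So $\Omega$ is a positivity domain, and it remains to show $\mathcal{S}_\infty=\overline{\mathcal{E}(\Omega)}$.

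The inclusion $\overline{\mathcal{E}(\Omega)}\subseteq\mathcal{S}_\infty$ goes as follows. Since $\Omega$ is open and totally disconnected with a clopen cylinder base, any $f\in \mathrm{C}_\mathrm{c}(\Omega)$ has compact support covered by finitely many cylinders $Z^{m_i}(u_i,v_i)\subseteq\Omega$, each with $e^{m_i}_{u_i,v_i}\in\mathcal{S}_\infty$. Using (\ref{eqn:disjoint-union-cylinder-sets}) and (\ref{eqn:intersection-cylinder-sets}) we refine these to a disjoint family at a common level. A cylinder $Z^{m'}(u',v')\subseteq Z^{m}(u,v)$ has $e^{m'}_{u',v'}=e^{m'}_{u',u'}\,e^m_{u,v}$ by (\ref{eqn:product-matrix-units}), so by bimodularity it again lies in $\mathcal{S}_\infty$. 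Uniform continuity of $f$ lets us approximate $f$ uniformly by linear combinations of indicators of cylinders of one fine level contained in $\Omega$. On functions supported in a fixed finite union of level-$M$ cylinders, the C$^*$-norm is dominated by a constant times the sup norm (the number of cylinders per row is bounded), so the approximation converges in $\mathcal{A}_\infty$. Since $\mathcal{S}_\infty$ is closed, $f\in\mathcal{S}_\infty$.

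The reverse inclusion $\mathcal{S}_\infty\subseteq\overline{\mathcal{E}(\Omega)}$ is the main obstacle, and it is where Lemma \ref{lemm_supp} and Power's expectations $P_n$ enter. Fix $a\in\mathcal{S}_\infty$. First, $P_n(a)\to a$: this holds on $\bigcup_k\mathcal{A}_k$, because $\mathcal{A}_k\subseteq\widetilde{\mathcal{A}}_n$ for $n\ge k$, and it extends to all of $\mathcal{A}_\infty$ since the $P_n$ are contractive. Next, $P_n(a)=\sum_{y,z\in X_n} d_{y,z}e^n_{y,z}$ with $d_{y,z}\in\mathcal{D}_\infty$, and we may take $d_{y,z}=e^n_{y,y}d_{y,z}$.

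We would like each piece $d_{y,z}e^n_{y,z}=e^n_{y,y}P_n(a)e^n_{z,z}$ to lie in $\mathcal{S}_\infty$. Bimodularity of $\mathcal{S}_\infty$ only gives $e^n_{y,y}\,a\,e^n_{z,z}\in\mathcal{S}_\infty$, and its compression under $P_n$ is the piece we want. So the key claim is $P_n(\mathcal{S}_\infty)\subseteq\mathcal{S}_\infty$. We plan to prove this by writing $P_n$ as a limit of averages of the form $\sum_j p_j\, b\, p_j'$, where $p_j,p_j'$ are projections in $\mathcal{D}_\infty$. Concretely, on $\mathcal{A}_m$ Lemma \ref{lemm_supp} shows that $P_n$ keeps exactly those level-$m$ matrix units lying under a level-$n$ cylinder. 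This is a Schur multiplier by a $0/1$ pattern, hence of the form $b\mapsto\sum_{x} e^m_{\cdot}\, b\, e^m_{\cdot}$ over blocks, and it therefore preserves $\mathcal{D}_\infty$-bimodules. We pass to the limit using contractivity and closedness of $\mathcal{S}_\infty$.

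Given the key claim, $\mathcal{S}_\infty\cap\widetilde{\mathcal{A}}_n$ is spanned by elements $d\,e^n_{y,z}$. Approximating $d$ by elements of $\mathcal{D}_m$, each such element is close to a finite sum of terms $c\,e^m_{u,v}$ with $e^m_{u,v}=e^m_{u,u}\,d\,e^n_{y,z}\,e^m_{v,v}$ (up to a scalar) lying in $\mathcal{S}_\infty$. Each such matrix unit has $Z^m(u,v)\subseteq\Omega$ by the definition of $\Omega$, so these terms lie in $\mathcal{E}(\Omega)$, and closedness finishes the argument.
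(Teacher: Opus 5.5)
Your $\Omega$ coincides with the paper's, and your proof of $\overline{\mathcal{E}(\Omega)}\subseteq\mathcal{S}_\infty$ is correct. It is more direct than the paper's route to the corresponding inclusion $\overline{\mathcal{E}(\Omega)}\cap\mathcal{A}_n\subseteq\mathcal{S}_n$, which goes through $P_n$, a density argument giving $Z^n(u,v)\subseteq\Omega$, and \cite[Proposition 7.3]{Pow93}. You do not even need the uniform approximation, since $f\cdot 1_{Z^M(s,t)}=d\,e^M_{s,t}$ with $d\in\mathcal{D}_\infty$.

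For $\mathcal{S}_\infty\subseteq\overline{\mathcal{E}(\Omega)}$ the paper cites \cite[Theorem 4.7]{Pow93}, $\mathcal{S}_\infty=\overline{\bigcup_n\mathcal{S}_n}$, and uses $\mathcal{S}_n\subseteq\mathcal{E}(\Omega)$; you try to prove this from scratch. Your key claim $P_n(\mathcal{S}_\infty)\subseteq\mathcal{S}_\infty$ is sound. By \autoref{lemm_supp}, $P_n$ agrees on $\mathcal{A}_m$ ($m>n$) with $\Phi_m(b)=\sum_{|x|=m-n}q_x\,b\,q_x$, where $q_x=\sum_{y\in X_n}e^m_{yx,yx}$. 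Each $\Phi_m$ is defined and contractive on all of $\mathcal{A}_\infty$, maps every $\mathcal{D}_\infty$-bimodule into itself, and $\Phi_m\to P_n$ pointwise.

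The gap is in your last paragraph. The identity $e^m_{u,u}\,d\,e^n_{y,z}\,e^m_{v,v}=c\,e^m_{u,v}$ is false. For $u=yx$, $v=zx$ the left side is $(d\,e^m_{u,u})\,e^m_{u,v}$, i.e.\ $d$ restricted to $Z^m(u,u)$ times $e^m_{u,v}$. This is a scalar multiple of $e^m_{u,v}$ only if $d$ is constant there. The conclusion fails as well, not just its justification:
take $y\neq z$ in $X_n$, a point $\gamma_0=(yx_0,zx_0)\in Z^n(y,z)$, $U:=Z^n(y,z)\setminus\{\gamma_0\}$, $\Omega_0:=\mathcal{R}_X^{(0)}\cup U\cup U^{-1}$, and $\mathcal{S}_\infty:=\overline{\mathcal{E}(\Omega_0)}$. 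This is a closed operator $\mathcal{D}_\infty$-system by \autoref{lem:bimod}.
\begin{itemize}
\item If $d=d\,e^n_{y,y}\in\mathcal{D}_\infty$ has $yx_0$ as its only zero in $Z^n(y,y)$, then $d\,e^n_{y,z}\in\mathcal{S}_\infty$ (cut $d$ off near $yx_0$).
\item Since $\|b\|_\infty\le\|b\|$ on $\mathrm{C}_\mathrm{c}(\mathcal{R}_X)$, evaluation at $\gamma_0$ extends to a bounded functional that vanishes on $\mathcal{S}_\infty$ but equals $1$ on every $e^m_{u,v}$ with $\gamma_0\in Z^m(u,v)$. So no such matrix unit lies in $\mathcal{S}_\infty$.
\item Consequently, any approximant $d'\in\mathcal{D}_m$ with nonzero coefficient on the level-$m$ cylinder of $yx_0$ produces a term outside $\mathcal{S}_\infty$.
\end{itemize}
As written, you have therefore not shown $P_n(a)\in\overline{\mathcal{E}(\Omega)}$.

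The missing idea is local invertibility in the masa, combined with a careful choice of approximant. If $|d|\ge\varepsilon$ on $Z^m(u,u)$ with $u=yx$, then $d\,e^m_{u,u}$ has an inverse $h$ in $e^m_{u,u}\mathcal{D}_\infty$. By bimodularity, $e^m_{u,v}=h\,e^m_{u,u}(d\,e^n_{y,z})\,e^m_{v,v}\in\mathcal{S}_\infty$ for $v=zx$.

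Now choose $m$ so that $d$ oscillates by less than $\varepsilon$ on level-$m$ cylinders. Let $d'\in\mathcal{D}_m$ take a value of $d$ on the cylinders where $\min|d|\ge\varepsilon$ and $0$ on the others. Then $\|d-d'\|\le 2\varepsilon$, and $d'e^n_{y,z}$ is a combination of matrix units lying in $\mathcal{S}_\infty$, hence in $\mathcal{E}(\Omega)$. With this repair your argument becomes a self-contained proof of the UHF case of \cite[Theorem 4.7]{Pow93}; alternatively, simply cite that theorem as the paper does.
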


\begin{proof}
We begin by giving a candidate for the positivity domain $\Omega \subseteq \mathcal{R}_X$.
To this end, consider $\cl S_n :=\cl S_\infty\cap \cl A_n$ and observe that $\cl S_n\subseteq\cl S_{n+1}$. We obtain an inductive sequence of concrete operator $\cl D_n$-systems, where $\cl D_n := \cl D_\infty\cap \cl A_n$ is the image $\pi_{\infty,n}(\cl D_{X_n})$ of the diagonal matrix algebra $\mathcal{D}_{X_n} \subseteq \mathcal{M}_{X_n}$ in $\mathcal{A}_\infty$, for every $n \in \mathbb{N}$. 
By \cite[Theorem 4.7]{Pow93}, \cite[Proposition 6.7]{MT18}, we have 
\begin{align} 
\cl S_\infty=\overline{\bigcup_{n\in\mathbb N}\cl S_n}.
\end{align}
We write $\mathcal{S}_{X_n} := \pi_{\infty,n}^{-1}(\mathcal{S}_n) \subseteq \mathcal{M}_{X_n}$, and note that $\mathcal{S}_{X_n}$ is $\cl D_{X_n}$-bimodular.
Hence, the operator system $\cl S_{X_n}$ is a graph operator system for a graph $G_n$ on the vertex set $X_n = \prod_{i=1}^n [d_i]$ consisting of initial words of length $n$ \cite{PPS89}:
two vertices $u, v \in X_n$ of $G_n$ are adjacent or equal, denoted by $u\simeq_{G_n} v$, if and only if the matrix unit $E_{u,v} \in \mathcal{M}_{X_n}$ is an element of $\cl S_{X_n}$, if and only if $e^n_{u,v} \in \mathcal{S}_n \subseteq \mathcal{A}_\infty$.
As $\mathcal{S}_n \subseteq \mathcal{S}_{n+1}$, and hence $\pi_{n+1,n}(\cl S_{X_n})\subseteq \cl S_{X_{n+1}}$, we note that $u\simeq_{G_n}v$ implies $uw\simeq_{G_{n+1}}vw$, for any $w\in [d_{n+1}]$. 

Let 
\begin{align} 
\Omega_n:=\bigcup_{u\simeq_{G_n}v} Z^n(u,v)\subset \cl R_X, \quad \Omega :=\bigcup_{n\in\mathbb N}\Omega_n.
\end{align}
Clearly, we have $\Omega_{n} \subseteq \Omega_{n+1}$ and $\Omega_n$ is an open and symmetric subset of $\mathcal{R}_X$ which contains the diagonal, i.e.\ $\Omega_n$, for every $n\in\mathbb{N}$, and thus also $\Omega$ is a positivity domain.
We now show that $\overline{\cl E(\Omega)} \cong \cl S_\infty$ under the identification $\mathrm{C}^*(\mathcal{R}_X) \cong \mathcal{A}_\infty$, which will conclude the proof of this proposition. 

We first note that the inclusion $\cl S_n \subseteq  \overline{\cl{E}(\Omega)}\cap \cl A_n$ is immediate by construction:
indeed, assume $e_{u,v}^n \in \mathcal{S}_n$, equivalently $E_{u,v} \in \mathcal{S}_{X_n}$, equivalently $u \simeq_{G_n} v$.
By construction, it follows that $Z^n(u,v) \subseteq \Omega_n \subseteq \Omega$, showing that $e_{u,v}^n = 1_{Z^n(u,v)} \in \mathcal{E}(\Omega) \cap \mathcal{A}_n$.

To see the other inclusion $\cl S_n \supseteq \overline{\cl{E}(\Omega)}\cap \cl A_n$ we follow similar arguments as in the proof of \cite[Theorem 6.12]{MT18}. 
Since $\mathcal{A}_n = \mathrm{span}\{e_{u,v}^n \mid u,v \in X_n\}$, and $\mathcal{E}(\Omega)$ is $\cl D_n$-bimodular, it is enough to see that any $e_{u,v}^n\in  \overline{\mathcal{E}(\Omega)}\cap \cl A_n$ belongs to $\cl S_{n}$, for all $n \in \mathbb{N}$,  $u,v\in X_n$. 

Fix a positive integer $n\in\mathbb{N}$ and words $u,v \in X_n$ such that $e_{u,v}^n \in \overline{\mathcal{E}(\Omega)} \cap \mathcal{A}_n$.
Let $(f_m)_{m\in\mathbb{N}} \in \cl E(\Omega)$ be a sequence of continuous functions with $\mathrm{supp}(f_m) \subseteq \Omega$, and such that $1_{Z^n(u,v)} = \lim_{m\to\infty} f_m$ in $\mathrm{C}^*(\cl R_X)$.
Note that $\mathrm{span}\{f \in \mathrm{C}_\mathrm{c}(\mathcal{R}_X) \mid \exists n \in \mathbb{N}, u,v \in X_n : \supp(f) \subseteq \Omega \cap Z^n(u,v) \}$ is dense in $\mathcal{E}(\Omega)$, and recall from the proof of \autoref{main_thm} that $\mathcal{E}(\Omega)$ is spanned by continuous compactly supported functions whose support is a bisection contained in $\Omega$.
Recall from the paragraph preceding \autoref{lemm_supp}, the $\cl D_\infty$-bimodule conditional expectation $P_n : \mathcal{A}_\infty \rightarrow \widetilde{\mathcal{A}}_n$, where $\widetilde{\mathcal{A}}_n$ is the $\cl D_\infty$-bimodule generated by $\cl A_n$.
Since $e_{u,v}^n \in \mathcal{A}_n$ is preserved by $P_n$ and by continuity of $P_n$, we have
\begin{align}\label{eq_cond_exp}
e_{u,v}^n = \lim_{m\to\infty} P_n(f_m),
\end{align}
where we are now viewing $f_m$ as an element of $\mathcal{A}_\infty$ (and will be doing so in the sequel).

For any $f \in \mathrm{C}_\mathrm{c}(\mathcal{R}_X)$, $l \in \mathbb{N}$ and $s,t \in X_l$, observe that if $\supp(f) \subseteq Z^l(s,t)$, then there is a function $d_{s,t}\in \mathrm{C}(X)$ such that $f(x,y)=d_{s,t}(x)1_{Z^l(s,t)}(x,y)$: 
we may let $d_{s,t}(su)=f(su,tu)$, for $u\in \prod_{k\geq l} [d_k]$, and $d_{s,t}(x)=0$ otherwise. 
For every index $m$, the support of the function $f_m$ can be covered by finitely many bisections $Z^l(s,t)$ contained in $\Omega$.
Using a partition of unity, we write $f_m$ as a linear combination of $f_m^{l;s,t}$ each with support in $Z^l(s,t)$.
By \autoref{lemm_supp}, for every index $m$, we have $P_n(f_m^{l;s,t}) = 0$ if $Z^l(s,t) \nsubseteq \bigcup_{ p,q \in X_n} Z^n(p,q)$; 
hence, we may assume without loss of generality that $\mathrm{supp}(f_m) \subseteq \Omega \cap \bigcup_{ p,q \in X_n} Z^n(p,q)$.
Furthermore, as 
$$e_{u,v}^n = \lim_{m\to\infty} e_{u,u}^nP_n(f_m)e_{v,v}^n=\lim_{m\to\infty} P_n(e_{u,u}^n f_me_{v,v}^n),$$ and $\supp (e_{u,u}^n f_me_{v,v}^n)\subseteq Z^n(u,v)$, we can hence assume that $f_m$ is a sum of functions $f_m^{l;s,t}$ each supported in $Z^l(s,t)\subseteq \Omega\cap Z^n(u,v)$.

Next we will prove for our fixed $n \in \mathbb{N}$ and $u,v\in X_n$ that $Z^n(u,v)\subseteq \Omega$, which by \cite[Proposition 7.3]{Pow93} would imply that $e_{u,v}^n\in \cl S_n$ and hence the inclusion $\overline{\cl E(\Omega)}\cap\cl A_n\subseteq \cl S_n$.  
Consider the set
\begin{align} 
Y:=\bigcup_{m\in\mathbb{N}}\supp (P_n(f_m))\subseteq \Omega\cap Z^n(u,v)\subseteq Z^n(u,v).
\end{align}
We observe first that $\overline{Y}=\overline{\Omega\cap Z^n(u,v)}= Z^n(u,v)$. 
Indeed, assuming that $\overline{Y}\subsetneq Z^n(u,v)$, and hence $\overline{r(Y)}\ne Z^n(u,u)$, as $r$ is a homeomorphism on bisections, we can find a positive element $a\in \cl D_\infty^+$ in the masa with $a=1$ on $\overline{r(Y)}$ and $a\ne 1_{Z^n(u,u)}$. 
As $P_n(f_m)$ is supported in $\overline{Y}$, and hence  $aP_n(f_m)=P_n(f_m)$, for all $m\in\mathbb{N}$, we get from the approximation (\ref{eq_cond_exp}) that $a1_{Z^n(u,v)}=1_{Z^n(u,v)}$, giving a contradiction;
thus
\begin{align}\label{eqn:closure-Y}
    \overline{Y} = Z^n(u,v).
\end{align}

Now choose a set of disjoint bisections $\{Z^{n_k}(s_k,t_k)\}_{k\in K}$ in $\Omega\cap Z^n(u,v)$, where $K$ is finite or countable, such that  $\supp (P_n(f_m))\subseteq \bigcup_{k\in F_m} Z^{n_k}(s_k,t_k)$ for some finite subset $F_m\subseteq K$.  
By (\ref{eqn:closure-Y}) we have $\overline{Y}\subseteq \overline{\bigcup_{k\in K} Z^{n_k}(s_k,t_k)}\subseteq Z^n(u,v)=\overline{Y}$, giving 
\begin{align} 
Z^n(u,v)=\overline{\bigcup_{k\in K} Z^{n_k}(s_k,t_k)}.
\end{align}
 
If $K$ is finite, we obtain $Z^n(u,v)=\bigcup_{k\in K} Z^{n_k}(s_k,t_k)\subseteq \Omega$, and hence the statement.
Assume that $K$ is infinite. 
Then from (\ref{eq_cond_exp}), $1_{Z^n(u,v)}$ is the $\mathrm{C}^*(\cl R_X)$-limit of sums of  functions $g_{m,k}\in \mathrm{C}_\mathrm{c}(\cl R_X)$ (as $m \rightarrow \infty$) supported in $Z^{n_k}(s_k,t_k)$, $k\in F_m$. 
We have 
\begin{align} 
g_{m,k}(\gamma)
=d_{s_k,t_k}^m(r(\gamma))1_{Z^{n_k}(s_k,t_k)}(\gamma)
=d_{s_k,t_k}^m(r(\gamma))1_{Z^{n_k}(s_k,s_k)}(r(\gamma))1_{Z^n(u,v)}(\gamma), 
\end{align}
for some  $d_{s_k,t_k}^m\in \mathrm{C}(X)$ and all $\gamma\in\cl R_X$. 
It follows that $1_{Z^n(u,u)}(r(\gamma))$ is the uniform and hence pointwise limit of sums of $d_{s_k,t_k}^m(r(\gamma))1_{Z^{n_k}(s_k,s_k)}(r(\gamma))$:
\begin{align}
    & \|1_{Z^n(u,u)}-\sum_{k\in F_m} d_{s_k,t_k}^m1_{Z^{n_k}(s_k,s_k)}\|_{\mathrm{C}(X)}^2
    = \\
    & \hspace{2cm} = \Big\|\Big(1_{Z^n(u,u)}-\sum_{k\in F_m} d_{s_k,t_k}^m1_{Z^{n_k}(s_k,s_k)}\Big)\ast 1_{Z^n(u,v)} \ast \\
    & \hspace{6cm} \ast 1_{Z^n(u,v)}^* \ast \Big(1_{Z^n(u,u)}-\sum_{k\in F_m} d_{s_k,t_k}^m1_{Z^{n_k}(s_k,s_k)}\Big)^*\Big\|_{\mathrm{C}^*(\cl R_X)} \\
    & \hspace{2cm} = \Big\| \Big(1_{Z^n(u,u)}-\sum_{k\in F_m} d_{s_k,t_k}^m1_{Z^{n_k}(s_k,s_k)}\Big)\ast 1_{Z^n(u,v)} \Big\|_{\mathrm{C}^*(\cl R_X)}^2
    \to 0,
\end{align}
by 
the C$^*$-identity, using that $1_{Z^n(u,u)}=1_{Z^n(u,v)}\ast 1_{Z^n(v,u)}$ and that the function in the first line is
supported on $Z^n(u,u)$.

Assuming that $Z^n(u,v)\ne \bigcup_{k\in K} Z^{n_k}(s_k,t_k)$ we find $\gamma\in Z^n(u,v)\setminus \bigcup_{k\in K} Z^{n_k}(s_k,t_k)$ and hence $r(\gamma)\in Z^n(u,u)\setminus \bigcup_{k\in K} Z^{n_k}(s_k,s_k)$, giving $1_{Z^{n_k}(s_k,s_k)}(r(\gamma))=0$, for all $k\in K$, while $1_{Z^n(u,u)}(r(\gamma))=1$. 
A contradiction, that shows that $Z^n(u,v)=\bigcup_{k\in K} Z^{n_k}(s_k,t_k)\subset\Omega$. 
We conclude that $\overline{\mathcal{E}(\Omega)} \cap \mathcal{A}_n \subseteq \mathcal{S}_n$ which finishes the proof.
\end{proof}

%%%%%%%%%%%%%%%%%%%%%%%%%%%
\begin{corollary}\label{masa-bim-uhf}
Let $\cl{A}_\infty$ be a UHF algebra with canonical masa $\cl{D}_\infty$. 
Then every concrete operator $\cl{D}_\infty$-system $\cl{S}_\infty\subseteq\cl{A}_\infty$ is hyperrigid in $\mathrm{C}^*(\cl{S}_\infty)$. 
Furthermore, if the graph corresponding to the graph operator system $\cl{S}_\infty\cap\cl{A}_n$ is connected, for every $n\in\bb{N}$, then $\mathrm{C}^*(\cl{S}_\infty)=\cl{A}_\infty.$ 
\end{corollary}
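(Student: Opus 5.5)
The plan is to reduce the hyperrigidity of $\cl S_\infty$ to \autoref{groupoid_op_sys} via \autoref{prop:Masa-bimodule-positivity-domain}. First, I would note that a concrete operator $\cl D_\infty$-system $\cl S_\infty$ may be taken closed. Hyperrigidity only involves norm limits, and $\cl S_\infty$ and $\overline{\cl S_\infty}$ generate the same C$^*$-algebra. The closure is again a $\cl D_\infty$-bimodule. \autoref{main_thm} already states that hyperrigidity passes between $\cl S$ and $\overline{\cl S}$. By \autoref{prop:Masa-bimodule-positivity-domain} there is a positivity domain $\Omega\subseteq\cl R_X$ with $\overline{\cl S_\infty}\cong\overline{\cl E(\Omega)}$ under $\cl A_\infty\cong\mathrm{C}^*(\cl R_X)$.

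Next, $\cl R_X$ is an étale groupoid with compact unit space $X$: the cylinder sets $Z^n(u,v)$ are compact open bisections, and $X=\prod[d_i]$ is compact. So \autoref{groupoid_op_sys} applies and $\cl E(\Omega)$ is hyperrigid in $\mathrm{C}^*(\cl E(\Omega))$. More precisely, its proof writes $\cl E(\Omega)=\cl X^*+\cl A+\cl X$ in the form required by \autoref{main_thm}, and that proposition also yields hyperrigidity of the closure $\overline{\cl E(\Omega)}$. Transporting along the $^*$-isomorphism $\mathrm{C}^*(\cl R_X)\cong\cl A_\infty$, which is a complete order isomorphism and preserves generated C$^*$-algebras and representations, we get that $\cl S_\infty$ is hyperrigid in $\mathrm{C}^*(\cl S_\infty)$. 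Reducedness, $\mathrm{C}^*_\mathrm{e}(\cl S_\infty)=\mathrm{C}^*(\cl S_\infty)$, then follows as well.

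For the second assertion, suppose each graph $G_n$ from the proof of \autoref{prop:Masa-bimodule-positivity-domain} is connected. Then by \autoref{ex:Graph}, the C$^*$-algebra generated by the graph operator system $\cl S_{X_n}\subseteq\cl M_{X_n}$ is all of $\cl M_{X_n}$. Indeed, the matrix units along a path multiply to give $E_{u,v}$ for every pair of vertices. Applying $\pi_{\infty,n}$ gives $\cl A_n\subseteq\mathrm{C}^*(\cl S_n)\subseteq\mathrm{C}^*(\cl S_\infty)$ for all $n$. Since $\overline{\bigcup_n\cl A_n}=\cl A_\infty$, we get $\mathrm{C}^*(\cl S_\infty)=\cl A_\infty$. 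Equivalently, in groupoid terms, $\Omega$ is non-degenerate, since $\cl G(\Omega)\supseteq Z^n(u,v)$ for all $u,v\in X_n$ and all $n$, so $\cl G(\Omega)=\cl R_X$.

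The only delicate point I expect is the closedness issue in the first step. \autoref{prop:Masa-bimodule-positivity-domain} is stated for closed systems, while the corollary allows arbitrary concrete $\cl D_\infty$-systems. I would handle this by checking that hyperrigidity of $\overline{\cl S}$ implies that of $\cl S$: a ucp map agreeing with a representation on $\cl S$ agrees on $\overline{\cl S}$ by continuity. The rest is bookkeeping of the identification $\cl A_\infty\cong\mathrm{C}^*(\cl R_X)$.
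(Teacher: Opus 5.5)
Your proposal is correct and follows the paper's proof: hyperrigidity comes from \autoref{prop:Masa-bimodule-positivity-domain} combined with \autoref{groupoid_op_sys} and the closure clause of \autoref{main_thm}, and the second claim rests on the same fact, namely that connectedness of $G_n$ gives $\mathrm{C}^*(\cl S_n)=\cl A_n$. The differences are small improvements: you conclude $\mathrm{C}^*(\cl S_\infty)=\cl A_\infty$ directly from the density of $\bigcup_n\cl A_n$ instead of recasting it as $\cl R_X(\Omega)=\cl R_X$ and invoking non-degeneracy, and you handle the non-closed case explicitly, which the paper skips by applying \autoref{prop:Masa-bimodule-positivity-domain} (stated only for closed systems) without comment.
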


\begin{proof}
By \autoref{prop:Masa-bimodule-positivity-domain}, we have $\mathcal{S}_\infty = \overline{\mathcal{E}(\Omega)}$, and by \autoref{groupoid_op_sys} this operator system is the closure of a hyperrigid operator system, hence hyperrigid.

The second claim follows once we show that the positivity domain $\Omega$ constructed in the proof of \autoref{prop:Masa-bimodule-positivity-domain} is non-degenerate, whenever all the graphs $G_n$ associated to the graph operator systems $\mathcal{S}_\infty \cap \mathcal{A}_n$ are connected.
To this end, note that the graph $G_n$ is connected, if and only if $\mathrm{C}^*(\mathcal{S}_{X_n}) = \mathcal{M}_{X_n}$, if and only if $\mathrm{C}^*(\mathcal{S}_n) = \mathcal{A}_n \subseteq \mathcal{A}_\infty$, if and only if $Z^n(u,v) \subseteq \mathcal{R}_X(\Omega_n)$ for all words $u,v \in X_n$.
Recall that $\mathcal{R}_X(\Omega_n)$ here denotes the subgroupoid of $\mathcal{R}_X$ generated by the positivity domain $\Omega_n$.
We obtain the following chain of inclusions
\begin{align}
     \mathcal{R}_X(\Omega)
      = \mathcal{R}_X\Big(\bigcup_{n\in\mathbb{N}} \Omega_n\Big)
       \supseteq \bigcup_{n\in\mathbb{N}} \mathcal{R}_X(\Omega_n)
     \supseteq \bigcup_{n\in\mathbb{N}} \bigcup_{u,v\in X_n} Z^n(u,v)
    = \mathcal{R}_X,
\end{align}
showing that the positivity domain $\Omega \subseteq \mathcal{R}_X$ is non-degenerate.
\end{proof}

%%%%%%%%%%%%%%%%%%%%%%%%%%%
\begin{remark}
\autoref{prop:Masa-bimodule-positivity-domain} can be generalised to AF C$^*$-algebras in the following way.
Let $\cl B_\infty$ be a unital AF C$^*$-algebra with finite-dimensional subalgebra chain
\begin{align} 
\cl B_1\subseteq \cl B_2\subseteq\ldots\subseteq \cl B_n\subseteq\ldots \subseteq \cl B_\infty,
\end{align}
and let $\cl C_\infty$ be the associated canonical masa with the chain
\begin{align} 
\cl C_1\subseteq \cl C_2\subseteq\ldots\subseteq \cl C_n\subseteq\ldots \subseteq \cl C_\infty.
\end{align}
Let $\cl C_\infty\subseteq \cl S\subseteq \cl B_\infty$ be a closed $\cl C_\infty$-bimodule operator system.
By \cite[Theorem 4.7]{Pow93}, $\cl S$ is the closed union of the chain
\begin{align} \cl S_1\subseteq \cl S_2\subseteq\ldots\subseteq \cl S_n\subseteq\ldots,
\end{align}
where $\cl S_n=\cl S\cap \cl B_n$.

Let $\{e_{i,j}^k\}$ be the matrix unit system for $\cl B_\infty$ associated to the chains $\{\cl C_k\}$ and $\{\cl B_k\}$. 
Let $\widehat{\cl C}_\infty$ be the Gelfand spectrum of $\cl C_\infty$. As in \cite[Chapters 5,7]{Pow93}, associate to each $e_{i,j}^k$  the subset $E_{i,j}^k\subseteq\widehat{\cl C}_\infty\times\widehat{\cl C}_\infty$ and define a topological equivalence relation 
\begin{align} 
\cl R(\cl B_\infty)=\bigcup E_{i,j}^k
\end{align}
and a topological binary relation of $\cl S$  
\begin{align} 
\Omega(\cl S) =\bigcup\{E_{i,j}^k \mid e_{i,j}^k\in \cl S_k\}\subseteq \cl R(\cl B_\infty).
\end{align}
In the case of the UHF algebra $\mathcal{A}_\infty = \underset{\longrightarrow}{\lim} (\mathcal{M}_{X_n}, \pi_n)$ from \autoref{prop:Masa-bimodule-positivity-domain}, we have $\widehat{\cl C}_\infty \simeq X=\prod_{i=1}^\infty[d_i]$, $\{E_{i,j}^k\}=\{Z(u,v)\}$ and $\Omega(\cl S)=\Omega$, see \cite[Example 5.2]{Pow93}.
As in the case of $\cl A_\infty$, the sets $E_{i,j}^k$ form a base of open sets for the topology on $\cl R(\cl B_\infty)$; 
moreover, either $E_{i,j}^k\cap E_{s,t}^l=\emptyset$ or $E_{s,t}^l\subseteq E_{i,j}^k$ if $l>k$, and for each $i,j,k$ there are matrix units $e_{i',j'}^{k+1}$ such that $e_{i,j}^k=\sum e_{i',j'}^{k+1}$, giving also the relation $E_{i,j}^k=\bigsqcup E_{i',j'}^{k+1}$. 
All results from \cite{Pow93} used in \autoref{prop:Masa-bimodule-positivity-domain} and \autoref{masa-bim-uhf} are applicable in the AF case and hence following exactly the steps in the corresponding proofs we obtain the same statements when replacing the UHF algebra $\cl A_\infty$ by a general unital AF algebra $\cl B_\infty$. 
\end{remark}

%%%%%%%%%%%%%%%%%%%%%%%%%%%
\section{Bimodularity, positivity domains, and spectral synthesis}
\label{sec:sp-synthesis} 
%%%%%%%%%%%%%%%%%%%%%%%%%%%

In this section we demonstrate that,  if the \'etaleness assumption on $\mathcal{G}$ is dropped, operator subsystems of the groupoid C$^*$-algebra which are $\mathrm{C}_\mathrm{c}(\mathcal{G}^{(0)})$-bimodular do not in general correspond to operator systems generated by positivity domains;
in fact, counterexamples appear already when $\mathcal{G}$ is the additive group $\mathbb{R}^n$ (viewed as a groupoid over the unit space $(\mathbb{R}^n)^{(0)} = \{0\}$).
Our first example arises from failure of \emph{reduced spectral synthesis}, and our second from failure of \emph{compact operator synthesis}.
We begin by recalling these notions.

Let $G$ be a locally compact group with $\mathrm{A}(G)$ the Fourier algebra of $G$, i.e.\ the Banach $^*$-algebra given by the matrix coefficients of the left regular representation $\lambda$ of $G$ \cite{Eym63}.
Let $S\subseteq G$ be a closed subset. 
If $G$ is abelian the Fourier algebra $\mathrm{A}(G)$ is isomorphic through the Fourier transform to the Banach algebra $\mathrm{L}^1(\widehat{G})$, where $\widehat{G}$ is the dual of $G$. 
For any ideal $J \subseteq \mathrm{A}(G)$, set 
\begin{align} 
\mathrm{null}(J) := \{s\in G \mid u(s)=0\text{ for all }u\in J\}.
\end{align}
To the set $S$ we associate the ideals  
\begin{align} 
\mathrm{I}(S) := \{u\in \mathrm{A}(G) \mid u(s)=0 \text{ for all } s\in S\},
\end{align}
and 
\begin{align} 
\mathrm{J}(S) := \overline{\{ u\in \mathrm{A}(G) \mid u \text{ has compact support disjoint from }  S\}}.
\end{align}
It is known that $\mathrm{J}(S)$ and $\mathrm{I}(S)$ are the minimal and maximal ideals respectively  with the null set being $S$ \cite{KL18}. 
The set $S$ is called \emph{a set of spectral synthesis} if $\mathrm{J}(S)=\mathrm{I}(S)$. 
    
The dual of $\mathrm{A}(G)$ is the von Neumann algebra $\mathrm{VN}(G)$ which is isomorphic to $\mathrm{L}^\infty(\widehat{G})$ in case $G$ is abelian. 
Write ${\mathrm C}_{\mathrm r}^*(G)$ and ${\mathrm C}^*(G)$ for the reduced and full C$^*$-algebras of $G$ and observe that $\overline{{\mathrm C}_\mathrm{r}^*(G)}^{\mathrm{w}^*}=\mathrm{VN}(G)$.
    
For $T\in \mathrm{VN}(G)$ one defines the support of $T$ as 
\begin{align} 
\supp(T) := \{t\in G \mid u\cdot T\ne 0 \text{ whenever } u\in \mathrm{A}(G), u(t)\ne 0\},
\end{align}
where 
\begin{align}\label{eqn:Fourier-action-von-Neumann}
\langle u\cdot T,v\rangle := \langle T, uv \rangle,
\text{ for } v\in \mathrm{A}(G).
\end{align}
Here $\langle\cdot,\cdot\rangle$ is the pairing between $\mathrm{VN}(G)$ and $\mathrm{A}(G)$ given by $\langle T,u\rangle=\langle T\xi,\eta\rangle$, for $u(\cdot)=\langle\lambda(\cdot)\xi,\eta\rangle \in \mathrm{A}(G)$.
We also define the support of a subspace $W\subseteq \mathrm{VN}(G)$ as the smallest closed subset $\supp(W) \subseteq G$ with $\supp(T) \subseteq \supp(W)$, for all $T \in W$.
It is known (see \cite{KL18}) that $$\mathrm{J}(S)^\perp=\{T\in \mathrm{VN}(G) \mid \supp(T) \subseteq S\}$$ and 
\begin{align} 
\mathrm{I}(S)^\perp=\overline{\{\lambda(\mu) \mid \mu\in \mathrm{M}(G), \supp\mu\subset S\}}^{\mathrm{w}^*}=\overline{\mathrm{span}\{\lambda(s) \mid s\in S\}}^{\mathrm{w}^*},
\end{align}
where $\mathrm{M}(G)$ denotes the set of all complex Radon measures on $G$ and 
\begin{align}
\langle\lambda(\mu)\xi,\eta\rangle_{\mathrm{L}^2}
= \int_G\langle\xi(t^{-1} \cdot ),\eta\rangle_{\mathrm{L}^2}\,\mathrm{d}\mu(t), \, \xi,\eta\in \mathrm{L}^2(G);
\end{align}
here, for all $u \in \mathrm{A}(G)$, $\mu\in \mathrm{M}(G)$ and $s\in G$, we have \[\langle\lambda(\mu),u\rangle=\int_Gu(t)\,\mathrm{d}\mu(t) \text{ and }\langle\lambda(s),u\rangle=u(s).\]
    
We write $\mathrm{PM}(S)$ for the set $\mathrm{J}(S)^\perp$ and call it the space of pseudomeasures supported in $S$. 
It is the largest  weak$^*$ closed $\mathrm{A}(G)$-invariant subspace of the group von Neumann algebra whose support is $S$,  where the $\mathrm{A}(G)$-action on $\mathrm{VN}(G)$ is defined in (\ref{eqn:Fourier-action-von-Neumann}). 
Correspondingly, $\mathrm{N}(S):=\mathrm{I}(S)^\perp$ is the smallest weak$^*$ closed $\mathrm{A}(G)$-invariant subset with support in $S$, meaning that any other weak$^*$ closed subspace $W$ which is $\mathrm{A}(G)$-invariant, with $\supp (W)=S$, satisfies $\mathrm{N}(S)\subseteq W\subseteq \mathrm{PM}(S)$. 
Clearly $S$ is a set of spectral synthesis if and only if $\mathrm{N}(S)=\mathrm{PM}(S)$.
We say that $S$ is a \emph{set of reduced spectral synthesis} if $\mathrm{N}(S)\cap \mathrm{C}^*_\mathrm{r}(G) = \mathrm{PM}(S)\cap \mathrm{C}^*_\mathrm{r} (G)$. 
We observe that for any open subset $\Omega\subseteq G$, we have $\lambda({\mathrm C}_\mathrm{c}(\Omega))\subseteq \mathrm{N}(\overline{\Omega})\cap {\mathrm C}_{\mathrm r}^*(G)$ and $\supp( \lambda({\mathrm C}_\mathrm{c}(\Omega)))=\overline{\Omega}$.

%%%%%%%%%%%%%%%%%%%%%%%%%%%
\begin{example}\label{spectral_synth}
By \cite[Theorem 3.13]{stt20}, every non-discrete locally compact group $G$ containing an open abelian subgroup admits a subset $S\subseteq G$ that is not a set of reduced spectral synthesis. 
An example is the sphere $\mathbb S^{n-1}\subseteq\mathbb R^n$  for $n\geq 4$, which is, in addition, symmetric; see \cite[Proposition 3.3]{stt20}. 
Moreover, by \cite[Theorems 3.13 and 3.16]{stt20}, every non-discrete locally compact abelian group $G$ contains a closed subset $S\subseteq G$ that fails reduced spectral synthesis and is the closure of its interior, 
$S=\overline{\operatorname{int}(S)}$.
Furthermore, if $G$ contains a symmetric subset that fails reduced spectral synthesis (such as $\mathbb S^{n-1}\subseteq\mathbb R^n$ for $n\geq 4$), then $S$ can be chosen to be symmetric.
By possibly adding the closure of an neighborhood $U_1 \ni e$ to $S$, with $U_2 \cap S = \emptyset$ for $U_2 \supset \overline{U_1}$ open, we may also assume that $S$ contains the unit of $G$. 
    
Let $G$ be abelian  and take $\mathcal E=\mathrm{PM}(S)\cap \mathrm{C}^*_\mathrm{r}(G)$ for a symmetric closed set $S\subseteq G$, which is the closure of its interior ${\rm int}(S)$, but  not a set of reduced spectral synthesis, so that ${\rm int}(S)$ is a positivity domain, naturally associated to $\cl E$.
Note that $\mathcal E$ is a selfadjoint weak$^*$ and hence norm closed subspace of $\mathrm{C}^*_\mathrm{r}(G)=\mathrm{C}^*(G)$: 
if $\supp (T)\subseteq S$ then $\supp (T^*) \subseteq S^{-1}$.  
Assuming that $\mathcal  E=\overline{\{ f\in \mathrm{C}_\mathrm{c}(G) \mid \supp (f)\subseteq \Omega\}}^{\mathrm{C}^*_\mathrm{r}(G)}$ for a positivity domain $\Omega$, we get that $\supp(\mathcal E)=\overline{\Omega}=S$ and $\mathcal E\subseteq \mathrm{N}(S)\cap \mathrm{C}^*_\mathrm{r}(G)\ne \mathrm{PM}(S)\cap \mathrm{C}^*_\mathrm{r}(G)$, a contradiction. 
\end{example}

In the previous example, the unit space is trivial, consisting only of the identity element of the group $G$. 
Consequently, $\mathrm{C}_\mathrm{c}(\cl G^{(0)})$-bimodularity imposes no restrictions, and there are no natural positivity domains that can be associated with an operator subsystem,
unless one imposes additional assumptions, such as the $\mathrm{A}(G)$-modularity considered in that example.

The next example deals with transformation groupoids $\cl G$ and closed $\rm C_{\rm c}(\cl G^{(0)})$-bimodular operator systems in $\mathrm{C}^*(\cl G)$. 
In this setting, one can naturally associate a positivity domain $\Omega\subseteq\cl G$ to each such operator system. 
However, like in the group case (with additional $A(G)$-modularity), the operator system is still not uniquely determined by its associated positivity domain; 
this phenomenon is related to the failure of compact operator synthesis, an operator analog of reduced spectral synthesis, see \cite{stt20}.
We begin by providing the setup for transformation groupoids.

Let $G$ be a locally compact group with a fixed Haar measure $m$ and the modular function $\Delta$. 
The group $G$ acts on itself by left translation generating a transformation groupoid $\mathcal G=G\times G$ with the unit space $\mathcal G^{(0)}=\{(e,x) \mid x\in G\}$, which we identify with $G$,  the source and the range maps given by $s(g,x)=g^{-1}x$, $r(g,x)=x$, the composition  $(g,x)\circ(h,g^{-1}x)=(gh, x)$ and the inverse $(g,x)^{-1}=(g^{-1}, g^{-1}x)$, $g,h,x\in G$. 
    
Consider $\mathrm{L}^2(G) := \mathrm{L}^2(G,m)$, the map $M: \mathrm{C}_0(G)\mapsto \cl B(\mathrm{L}^2(G))$, $f\mapsto M_f$, where $M_f$ is the multiplication operator by $f$, and the left regular representation $\lambda:G\to \cl B(\mathrm{L}^2(G))$. 
Then $(M,\lambda)$ is a covariant representation of the dynamical system $(\mathrm{C}_0(G),G, {\rm lt})$, i.e.
\begin{align} 
M(\alpha_x(f))=\lambda(x)M(f)\lambda(x)^*,
\end{align}
where $\alpha_x(f)(y)=f(x^{-1}y)$, $x,y\in G$, the induced left translation action on $\mathrm{C}_0(G)$.
By the Stone--von Neumann theorem \cite[Theorem 4.24]{Wil07}, $M\rtimes\lambda$ is a faithful irreducible representation of the full crossed product algebra $\mathrm{C}_0(G)\rtimes_{\rm lt}G\simeq \mathrm{C}^*(\mathcal{G})$ onto $\cl K(\mathrm{C}^2(G))$. 
Moreover, if $K\in \mathrm{C}_\mathrm{c}(G\times G)$ and 
\begin{align} 
f_K(s,t)=\Delta(s^{-1}t)K(t,s^{-1}t), s,t\in G,
\end{align}
then $f_K\in \mathrm{C}_\mathrm{c}(G\times G)$ and, by calculations in the proof of \cite[Theorem 4.24]{Wil07}, we get 
\begin{align} 
((M\rtimes\lambda)(f_K)\xi)(s)=\int_G K(s,t)\xi(t)dm(t), \xi\in \mathrm{L}^2(G), 
\end{align}
so that $(M\rtimes\lambda)(f_K)$ is a Hilbert-Schmidt integral operator on $L^2(G)$ with kernel $K\in \mathrm{C}_\mathrm{c}(G\times G)\subseteq \mathrm{L}^2(G\times G)$, which we will denote by $T_K$. 
Note that for $f\in \mathrm{C}_\mathrm{c}(G\times G)$, we have $(M\rtimes\lambda)(f)^*=(M\rtimes\lambda)(f^\dagger)$, where 
\begin{align} 
f^\dagger(x,y) = \Delta(x^{-1})\overline{f(x^{-1},x^{-1}y)} = \Delta(x^{-1})\overline{f((x,y)^{-1})} = \Delta(x^{-1}) f^*(x,y).
\end{align}
    
Let 
\begin{align}\label{eqn:psi}
\psi:G\times G\to G\times G, (x,y)\mapsto (y,x^{-1}y).
\end{align}
It is a homeomorphism, with
\begin{align}
\psi^{-1}(x,y) = (xy^{-1},x),
\end{align}
and $\supp (f_K)\subseteq\Omega$ if and only if $\supp (K)\subseteq\psi(\Omega)$ for any $\Omega\subseteq G\times G$.  
In particular, given an open subset $\Omega\subseteq G\times G$, we obtain the equality
\begin{align} 
(M\rtimes\lambda)(\cl E(\Omega))=\{T_K\in \cl K(\mathrm{L}^2(G))  \mid  K\in \mathrm{C}_\mathrm{c}(G\times G), \supp (K)\subseteq\psi(\Omega)\}.
\end{align}
As for $f,g\in \mathrm{C}_\mathrm{c}(G)$, the operator $M_fT_KM_g$ is a Hilbert-Schmidt integral operator with the kernel $(s,t) \mapsto f(s)K(s,t)g(t)$, we have $(M\rtimes\lambda)(\cl E(\Omega))$ and its closure is a bimodule over $\{M_f \mid f\in \mathrm{C}_\mathrm{c}(G)\}$.

Assume now $G$ is second countable so that $(G,m)$ is a standard measure space.  We recall now some definitions and facts from \cite{stt20}. 

The C$^*$-subalgebra $\{M_f \mid f\in \mathrm{L}^\infty(G)\}\subseteq \cl B(\mathrm{L}^2(G))$ is a masa (maximal abelian selfadjoint algebra) in $\cl B(\mathrm{L}^2(G))$.
A subspace $\cl W\subseteq\cl B(\mathrm{L}^2(G))$ is called a masa-bimodule if $M_bTM_a\in\cl W$ for all $T\in\cl W$ and $a,b\in \mathrm{L}^\infty(G)$. 
We say that a closed subset $\kappa\subseteq G\times G$ supports an operator $T\in\cl B(\mathrm{L}^2(G))$ if $M_{\chi_\beta}TM_{\chi_\alpha}=0$, for any measurable rectangle $\alpha\times\beta$ such that  $(\alpha\times\beta)\cap\kappa$ is marginally null, i.e. $(\alpha\times\beta)\cap\kappa\subseteq N\times G\cup G\times N$ with $m(N)=0$. 
Note that $T$ is supported in $\kappa$ if and only if $T^*$ is supported in $\sigma(\kappa)$, where $\sigma$ is the flip map $\sigma(x,y)=(y,x)$, $x,y\in G$. 
Define the support, $\supp(\cl W)$, of a subspace $\cl W\in\cl B(\mathrm{L}^2(G))$ as the smallest closed subset that supports each operator in $\cl W$ (\cite{arv_syn}).  
Write
\begin{align} 
\mathfrak M_{\rm max}(\kappa)=\{T\in \cl B(\mathrm{L}^2(G)) \mid T\text{ is supported by }\kappa\};
\end{align}
it is the largest weak$^*$ closed  masa-bimodule,  where each operator is supported by $\kappa$. 
There is also a smallest one, $\mathfrak M_{\rm min}(\kappa)$;
in particular all $T_K$, with $K\in \mathrm(G\times G)$ and $\supp (K)\subseteq \kappa$, belong to $\mathfrak M_{\rm min}(\kappa)$. 
We have 
\begin{align}
\supp (\mathfrak M_{\rm max}(\kappa))=\supp (\mathfrak M_{\rm min}(\kappa))=\kappa.
\end{align} 

The set $\kappa \subseteq G \times G$ is said to be of {\it operator synthesis} (with respect to the Haar measure) if
\begin{align} 
\mathfrak M_{\rm max}(\kappa)=\mathfrak M_{\rm min}(\kappa).
\end{align}
It is of {\it compact operator synthesis} if 
\begin{align} 
\mathfrak M_{\rm max}(\kappa)\cap\cl K(\mathrm{L}^2(G))=\mathfrak M_{\rm min}(\kappa)\cap\cl K(\mathrm{L}^2(G)).
\end{align}
For a closed subset $S\subseteq G$ set
\begin{align} 
S^*=\{(x,y)\in G\times G \mid yx^{-1}\in S\}.
\end{align}
We have the following  connection between (reduced) spectral synthesis and (compact) operator synthesis: $S$ is a set of (reduced) spectral synthesis if an only if $S^*$ is a set of (compact) operator synthesis (with respect to the Haar measure), see \cite[Theorem 4.3, 4.11]{lt06}, \cite[Theorem 5.1]{stt20}. 

%%%%%%%%%%%%%%%%%%%%%%%%%%%
\begin{example}\label{compact_synth}
Let $G$ be an abelian non-discrete compact group, and let $S\subseteq G$ be a symmetric closed subset which, as in the previous example, is not a set of reduced spectral synthesis. 
We may further assume that $S$ contains the identity element of $G$. 
It follows that $S^*\subseteq G\times G$ is not a set of compact operator synthesis. 
Set
\begin{align}
\cl E=\mathfrak M_{\rm max}(S^*)\cap \cl K(\mathrm{L}^2(G)).
\end{align}
 Since $T^* \in \mathfrak{M}_\mathrm{max}(S^*)$ is supported by $\sigma(S^*)$, if and only if $T$ is supported by $S^*$, and $\sigma(S^*) = (S^{-1})^* = S^*$, as $S$ is symmetric, the subspace $\mathcal{E} \subseteq \mathcal{K}(\mathrm{L}^2(G))$ is self-adjoint.

As in the previous example, suppose that, for some open subset $\Omega\subseteq G\times G$, we have
\begin{align} 
\cl E
=
\overline{\cl E(\Omega)}^{\,\cl K(\mathrm{L}^2(G))}
=
\overline{\{T_K \mid K\in \mathrm{C}_\mathrm{c}(G\times G),\ \supp (K)\subseteq\psi(\Omega)\}}^{\,\cl K(\mathrm{L}^2(G))},
\end{align}
where $\psi(x,y)=(y,x^{-1}y)$, for all $x,y\in G$.
Then
\begin{align}
\supp(\cl E)=\overline{\psi(\Omega)}=S^*,
\end{align}
and
\begin{align}
\cl E\subseteq
\mathfrak M_{\rm min}(S^*)\cap\cl K(\mathrm{L}^2(G)),
\end{align}
 which contradicts the assumption that $S$ is not of reduced spectral synthesis. 
\end{example}

However, if $G$ is discrete so that the groupoid
$\cl G=G\times G$ is étale, we have that any bimodular operator system is determined by its associated positivity domain.

%%%%%%%%%%%%%%%%%%%%%%%%%%%
\begin{proposition}
Let $G$ be discrete and let $\cl E \subseteq \mathrm{C}^*(\cl G)$ be a closed $\mathrm{C}_\mathrm{c}(\cl G^{(0)})$-bimodular self-adjoint operator subspace. 
Then 
\begin{align}
\cl E=\overline{\cl E(\Omega)}^{\,\cl K(\ell^2(G))},
\end{align}
where
\begin{align}
\Omega
= 
\psi^{-1}\left(
\left\{
(x,y)\in G\times G \mid
\text{there exists }T\in\cl E
\text{ such that }
\langle T\delta_y,\delta_x\rangle\neq 0
\right\}
\right),
\end{align}
 with $\psi : G \times G \to G \times G, (x,y) \mapsto (y,x^{-1}y)$, 
and where $\{\delta_x\}_{x\in G}$ denotes the standard orthonormal basis of $\ell^2(G)$.
\end{proposition}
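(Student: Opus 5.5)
The plan is to work entirely inside $\mathcal{K}(\ell^2(G))$ via the faithful representation $M\rtimes\lambda$ and to show that both sides equal the closed span of those matrix units $E_{x,y}$ (defined by $E_{x,y}\delta_z=\delta_{y,z}\delta_x$) that are indexed by
\[
\kappa:=\{(x,y)\in G\times G \mid \exists\, T\in\mathcal{E} \text{ with } \langle T\delta_y,\delta_x\rangle\neq 0\},
\]
so that $\Omega=\psi^{-1}(\kappa)$. Since $G$ is discrete, $\mathcal{G}=G\times G$ is étale with discrete unit space $G$, Haar measure is counting measure and $\Delta\equiv 1$. Hence $\mathrm{C}_\mathrm{c}(\mathcal{G}^{(0)})$ consists of finitely supported functions on $G$, acting through the multiplication operators $M_f$; in particular the rank-one projections $P_x:=M_{\delta_x}$ lie in it.

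\textbf{Step 1: $\mathcal{E}$ is the closed span of $\{E_{x,y}\}_{(x,y)\in\kappa}$.} For $T\in\mathcal{E}$ and $x,y\in G$, bimodularity gives
\[
P_xTP_y=\langle T\delta_y,\delta_x\rangle E_{x,y}\in\mathcal{E}.
\]
Hence $E_{x,y}\in\mathcal{E}$ whenever $(x,y)\in\kappa$. Conversely, fix $T\in\mathcal{E}$ and an increasing net of finite sets $F\subseteq G$ exhausting $G$, with $P_F:=\sum_{x\in F}P_x$. Then
\[
P_FTP_F=\sum_{x,y\in F}\langle T\delta_y,\delta_x\rangle E_{x,y}
\]
is a finite combination of matrix units indexed by $\kappa$. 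Because $P_F\to I$ strongly and $T$ is compact, $P_FTP_F\to T$ in norm. This norm convergence is the one point needing care, and it is exactly where compactness of elements of $\mathrm{C}^*(\mathcal{G})\cong\mathcal{K}(\ell^2(G))$ is used. It follows that $\mathcal{E}=\overline{\mathrm{span}}\{E_{x,y}\mid (x,y)\in\kappa\}$.

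\textbf{Step 2: $\Omega$ is open and symmetric.} Openness is automatic in the discrete space $G\times G$. Self-adjointness of $\mathcal{E}$ makes $\kappa$ invariant under the flip $\sigma$. A direct computation gives
\[
\psi\bigl((g,x)^{-1}\bigr)=\psi(g^{-1},g^{-1}x)=(g^{-1}x,x)=\sigma(\psi(g,x)),
\]
so $\psi$ intertwines groupoid inversion with $\sigma$, and therefore $\Omega=\psi^{-1}(\kappa)$ satisfies $\Omega^{-1}=\Omega$. The unit $(e,x)$ maps to $(x,x)$, so $\Omega$ contains the unit space precisely when every diagonal matrix unit lies in $\mathcal{E}$. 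This is needed only if one insists that $\Omega$ be a positivity domain and is irrelevant to the displayed equality; I would add a remark to that effect.

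\textbf{Step 3: $(M\rtimes\lambda)(\mathcal{E}(\Omega))=\mathrm{span}\{E_{x,y}\mid (x,y)\in\kappa\}$.} Here $\mathrm{C}_\mathrm{c}(\Omega)$ is the space of finitely supported functions on $\Omega$. Using the formula $f_K(s,t)=K(t,s^{-1}t)$ (with $\Delta=1$), the point mass at $\psi^{-1}(x,y)$ is $f_K$ for $K=\delta_{(x,y)}$, so it is sent to $T_K=E_{x,y}$ under the counting measure. This is also the discrete case of the equivalence $\supp(f_K)\subseteq\Omega\iff\supp(K)\subseteq\psi(\Omega)$ noted earlier. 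Thus the image of $\mathcal{E}(\Omega)$ is exactly the span in question. Taking norm closures and combining with Step 1 gives $\mathcal{E}=\overline{\mathcal{E}(\Omega)}^{\,\mathcal{K}(\ell^2(G))}$.

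I expect the main obstacle to be Step 1: making precise that bimodularity over the finitely supported functions already yields the individual matrix units, and that the finite truncations converge in norm rather than only strongly. The latter is the step I would check first, and it follows from the standard fact that $P_F T P_F\to T$ in norm for compact $T$.
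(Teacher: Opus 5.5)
Your proposal is correct and follows essentially the same route as the paper. Both arguments use bimodularity to extract the matrix units $\delta_x\delta_y^*=E_{x,y}$ for $(x,y)\in\psi(\Omega)$, and compactness to approximate each $T\in\mathcal{E}$ in norm by finite truncations $M_{\chi_E}TM_{\chi_F}$. They also share the identity $\psi\circ(\cdot)^{-1}=\sigma\circ\psi$ for the symmetry of $\Omega$, and the identification of $\mathcal{E}(\Omega)$ with finite combinations of these matrix units. Your explicit justification that $P_FTP_F\to T$ in norm for compact $T$ makes precise a step the paper only asserts. Your remark that $\Omega$ contains the unit space only when every diagonal matrix unit lies in $\mathcal{E}$ is accurate and does not affect the stated equality.
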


\begin{proof}
Since $\cl E$ is self-adjoint, the set $\Omega$ is symmetric:  for $x,y\in G$ we have 
\begin{align} 
\psi((x,y)^{-1})=\psi(x^{-1}, x^{-1}y)=(x^{-1}y,y),
\end{align}
while $(x,y)^{-1}=\psi^{-1}(\sigma(\psi(x,y)))$, where $\sigma:G\times G\to G\times G$ is the flip map $(x,y)\mapsto(y,x)$. 
As $\psi(\Omega)$ is $\sigma$-invariant  by self-adjointness of $\mathcal{E}$,  we obtain that $\Omega^{-1}=\Omega$.

The $\mathrm{C}_\mathrm{c}(\cl G^{(0)})$-bimodularity of $\cl E$ implies that
\begin{align} 
T\in\cl E
\;\Longrightarrow\;
T^{E,F}:=M_{\chi_E}TM_{\chi_F}\in\cl E,
\end{align}
for every pair of finite subsets $E,F\subseteq G$. Since
$\cl E\subseteq\cl K(\ell^2(G))$, it follows that
\begin{align}
    \cl E
    =
    \overline{
        \left\{
            T^{E,F} \mid
            T\in\cl E,\;
            E,F\subseteq G \text{ finite}
        \right\}
    }^{\,\cl K(\ell^2(G))}.
\end{align}

For $T\in\cl E$ and finite subsets $E,F\subseteq G$, the operator $T^{E,F}$ is a finite-rank kernel operator whose kernel has finite support and is given by
\begin{align}
    K_{E,F}(s,t)
    =
    \langle T^{E,F}\delta_t,\delta_s\rangle
    =
    \langle TM_{\chi_F}\delta_t,
            M_{\chi_E}\delta_s\rangle,
    \quad s,t\in G.
\end{align}
Fix $T\in\cl E$ and finite subsets $E,F\subseteq G$. 
If $\langle T^{E,F}\delta_t,\delta_s\rangle\neq 0$, then, since $T^{E,F}\in\cl E$, we have $(s,t)\in\psi(\Omega)$. 
Hence $\operatorname{supp} K_{E,F}\subseteq\psi(\Omega)$, and it follows that
\begin{align}
\cl E
\subseteq
\overline{\cl E(\Omega)}^{\,\cl K(\ell^2(G))}.
\end{align}

For the reverse inclusion, suppose that $(x,y)\in\psi(\Omega)$. 
By the definition of $\Omega$, there exists $T\in\cl E$ such that $\langle T\delta_y,\delta_x\rangle\neq 0$.
Using the $\mathrm{C}_\mathrm{c}(\cl G^{(0)})$-bimodularity of $\cl E$, we obtain
\begin{align}
M_{\chi_{\{x\}}}TM_{\chi_{\{y\}}}
=
\langle T\delta_y,\delta_x\rangle\,
\delta_x\delta_y^*
\in\cl E.
\end{align}
Consequently, $\delta_x\delta_y^*\in\cl E$ for every $(x,y)\in\psi(\Omega)$.

Now let $S\in\cl E(\Omega)$, regarded as an operator in
$\cl K(\ell^2(G))$. 
Then
\begin{align}
S
=
\sum_{(x,y)\in\psi(\Omega)}
\langle S\delta_y,\delta_x\rangle\,
\delta_x\delta_y^*.
\end{align}
Since each rank-one operator $\delta_x\delta_y^*$ occurring in this sum belongs to $\cl E$, we conclude that $S\in\cl E$. 
Thus $\cl E(\Omega)\subseteq\cl E$, and since $\cl E$ is closed we get
\begin{align}
    \overline{\cl E(\Omega)}^{\,\cl K(\ell^2(G))}
    \subseteq\cl E,
\end{align}
as required.
\end{proof} 

%%%%%%%%%%%%%%%%%%%%
%%%%%%%%%%%%%%%%%%%%
%%%%%%%%%%%%%%%%%%%%

%\bibliographystyle{abbrv}
%\bibliography{biblio}

\end{document}